\definecolor{PineGreen}{rgb}{0.0,0.47,0.44}
\definecolor{MidnightBlue}{rgb}{0.1,0.1,0.44}
\definecolor{magenta}{rgb}{1.0,0.0,1.0}
\definecolor{bl1}{HTML}{4479A1}
\definecolor{pur1}{HTML}{52196D}
\definecolor{mag1}{HTML}{2AD0F1}
\definecolor{org1}{rgb}{.92,.39.21}
\definecolor{pur2}{rgb}{.53,.47,.7}
\newcommand{\spc}{\hspace*{0.25in}}
\newcommand{\eqnum}{\refstepcounter{equation}\textup{\tagform@{\theequation}}}
\newtheorem{theorem}{Theorem}
\numberwithin{theorem}{section}
\newtheorem{proposition}[theorem]{Proposition}
\newtheorem*{theorem*}{Theorem}
\newtheorem{lemma}[theorem]{Lemma}
\newtheorem{corollary}[theorem]{Corollary}
\theoremstyle{definition}
\newtheorem{definition}[theorem]{Definition}
\theoremstyle{remark}
\newtheorem{remark}[theorem]{Remark}
\newtheorem{example}[theorem]{Example}
\newcommand{\Con}{\mathbf{Con}}
\newcommand{\Gr}{\mathbf{Gr}}
\newcommand{\RR}{\mathbb{R}}
\newcommand{\QQ}{\mathbb{Q}}
\newcommand{\PP}{\mathbb{P}}
\newcommand{\CC}{\mathbb{C}}
\DeclareMathOperator{\codim}{codim}
\newcommand{\bV}{\mathbf{V}}
\newcommand{\cO}{\mathscr{O}}
\newcommand{\jac}{\text{\bf J}}
\DeclareRobustCommand
\newcommand{\sing}[1]{#1_{\text{sing}}}
\newcommand{\reg}[1]{#1_{\text{reg}}}
\newcommand{\xx}{\mathbf{x}}
\newcommand{\yy}{\mathbf{y}}
\newcommand{\uu}{\mathbf{u}}
\newcommand{\setof}[2]{\{#1\;|\;#2\}}
\newcommand{\sat}[2]{(#1:#2^{\infty})}
\newcommand{\WS}{W_{\bullet}}
\newcommand{\mthcall}[1]{\text{\bf #1}}
\newcommand{\field}{\mathbb{K}}
\newcommand{\mord}{\prec}
\DeclareMathOperator{\lcm}{lcm}
\DeclareMathOperator{\lc}{lc}
\DeclareMathOperator{\lm}{lm}
\DeclareMathOperator{\ass}{Assoc}
\DeclareMathOperator{\mon}{Mon}
\DeclareMathOperator{\gr}{gr}
\def\DD{D\kern-.7em\raise0.3ex\hbox{\char '55}\kern.33em}
\definecolor{Ftitle}{RGB}{11,46,108}
\colorlet{tableheadcolor}{Ftitle!25} 
\colorlet{tablerowcolor}{gray!10} 
\begin{document}\setlength\parindent{15pt}
	
	\title{Faster Computation of Whitney Stratifications and Their Minimization}
	\author{Martin Helmer} 
	\address[MH]{
		Department of Mathematics, Swansea University,
		Swansea, Wales, UK}\email{martin.helmer@swansea.ac.uk}
              \author{Rafael Mohr}\address[RM]{Department of Computer Science, KU Leuven,
              Leuven, Belgium}\email{rafaeldavid.mohr@kuleuven.be}\thanks{The second author is the corresponding author.}
	\maketitle
\begin{abstract}
  We describe two new algorithms for the computation of Whitney
  stratifications of real and complex algebraic varieties. The first
  algorithm is a modification of the algorithm of Helmer and Nanda
  (HN) \cite{hnFOCM,hn2Real}, but is made more efficient by using
  techniques for equidimensional decomposition rather than computing
  the set of associated primes of a polynomial ideal at a key step in
  the HN algorithm. We note that this modified algorithm may fail to
  produce a minimal Whitney stratification even when the HN algorithm
  would produce a minimal stratification.

  The second algorithm coarsens a given Whitney stratification of a
  complex variety to the unique minimal Whitney stratification; we refer to this as the {\em minimization} of a stratification. The theoretical
  basis for our approach is a classical result of Teissier. To our
  knowledge this yields the first algorithm for computing a minimal
  Whitney stratification.

  \smallskip
  \noindent \textbf{Keywords.} Whitney stratification, Gröbner basis, Conormal variety.
 
\end{abstract}

\section{Introduction} First introduced by Whitney in
\cite{whitney1965tangents}, the concept of {\em Whitney
  stratification} is now an essential tool in the study of singular
spaces. This construction provides the basic structure needed to
decompose singular spaces into smooth manifolds which join together in
a desirable way. In this note we present a new method to compute such
statifications which substantially improves on the state of the art in
many cases, building on the recent works of \cite{hnFOCM,hn2Real} by
incorporating ideas used in symbolic algorithms which compute
equidimensional decompositons of polynomial ideals,
e.g.~\cite{decker1999}. We additionally present an algorithm for
minimizing a given Whitney stratification.

Our main objects of study will be algebraic varieties defined by systems of polynomial equations $f_1, \dots, f_r$ in the polynomial ring $\CC[x_1, \dots, x_n]$ over the field of complex numbers $\CC$:$$
\bV(f_1, \dots, f_r):=\left\lbrace (x_1, \dots, x_n)\in \CC^n\; |\; f_1(x)=\cdots =f_r(x)=0 \right\rbrace. 
$$ To demonstration of the utility, and arguably the necessity, of
Whitney stratifications when studying the geometry and topology of
singular varieties we consider a concrete example over the real numbers $\RR$ for illustrative purposes. Suppose we wish to study the curve in $\RR^2$  defined by the parametric polynomial \begin{equation}
   f_z(x,y)=(y-1)^2-(x-z)(x-1)^2 \label{eq:planarCubic}
\end{equation}
in variables $x,y$ with parameter $z$. For $z<1$ the real variety
$f_z=0$ is a (connected) nodal cubic, for $z>1$ the real variety
$f_z=0$ has two connected components and is smooth, and at $z=0$ the
curve is a cubic cusp, see \Cref{fig:curve}.

\begin{figure*}[h!]%
\captionsetup[subfigure]{justification=centering}
    \begin{subfigure}[t]{0.3\textwidth}
        \centering
         \includegraphics[width=0.8\linewidth]{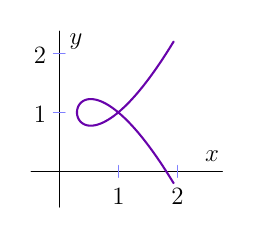}%
             \captionsetup{justification=raggedright,
singlelinecheck=false
}
        \caption{When $z<1$ we obtain a nodal cubic with one loop.}
        \label{fig:node}
    \end{subfigure}%
    \begin{subfigure}[t]{0.3\textwidth}
        \centering
         \includegraphics[width=0.8\linewidth]{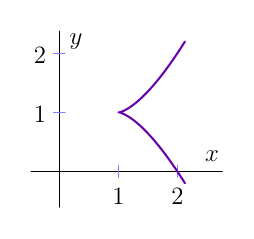}%
             \captionsetup{justification=raggedright,
singlelinecheck=false
}
        \caption{When $z=1$  we obtain a cusp cubic.}%
        \label{fig:cusp}
    \end{subfigure}%
    \begin{subfigure}[t]{0.3\textwidth}
        \centering
        \includegraphics[width=0.8\linewidth]{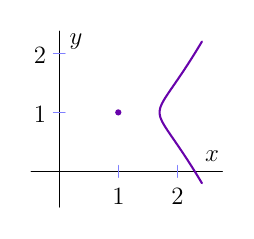}%
                    \captionsetup{justification=raggedright,
singlelinecheck=false
}
    \caption{When $z>1$, over the reals, we get a curve with two connected components. }
        \label{fig:smooth}
    \end{subfigure}%
                \captionsetup{justification=raggedright,
singlelinecheck=false
}
    \caption{Plots of the curve defined by \eqref{eq:planarCubic} for different parameter values $z$; the topology of the curve changes at $z=1$. While the curve in (\subref{fig:smooth}) is smooth and has two connected components (of different dimensions) in $\RR^2$ it is connected and singular, with singularity at $(1,1)$, in $\CC^2$.}
\label{fig:curve}
\end{figure*}
 
To understand the geometry underlying this change in topology at $z=1$ for the parametric curve $f_z$ consider the variety
$X=\bV(f)$ in $\RR^3$ defined by the same polynomial
$f(x,y,z)=(y-1)^2-(x-z)(x-1)^2$, plotted in 
\Cref{fig:Surface}. The surface $X$ is singular along the entire line
${\rm Sing}(X)=\bV(x-1, y-1)$ and the projection map $\pi:\RR^3\to \RR$
onto the last coordinate, $(x,y,z)\to z$, restricted to either the
manifold $X-\bV(x-1,y-1)$ or to the manifold $\bV(x-1,y-1)$ is a
submersion, however the behavior of the fibers of $\pi$ restricted to
$X-\bV(x-1,y-1)$ depends on whether $z$ is greater than or less than
$1$, while the behavior of the fibers of the restriction to
$\bV(x-1,y-1)$ is independent of $z$.
\begin{figure}[h!]\centering

\includegraphics[scale=0.25]{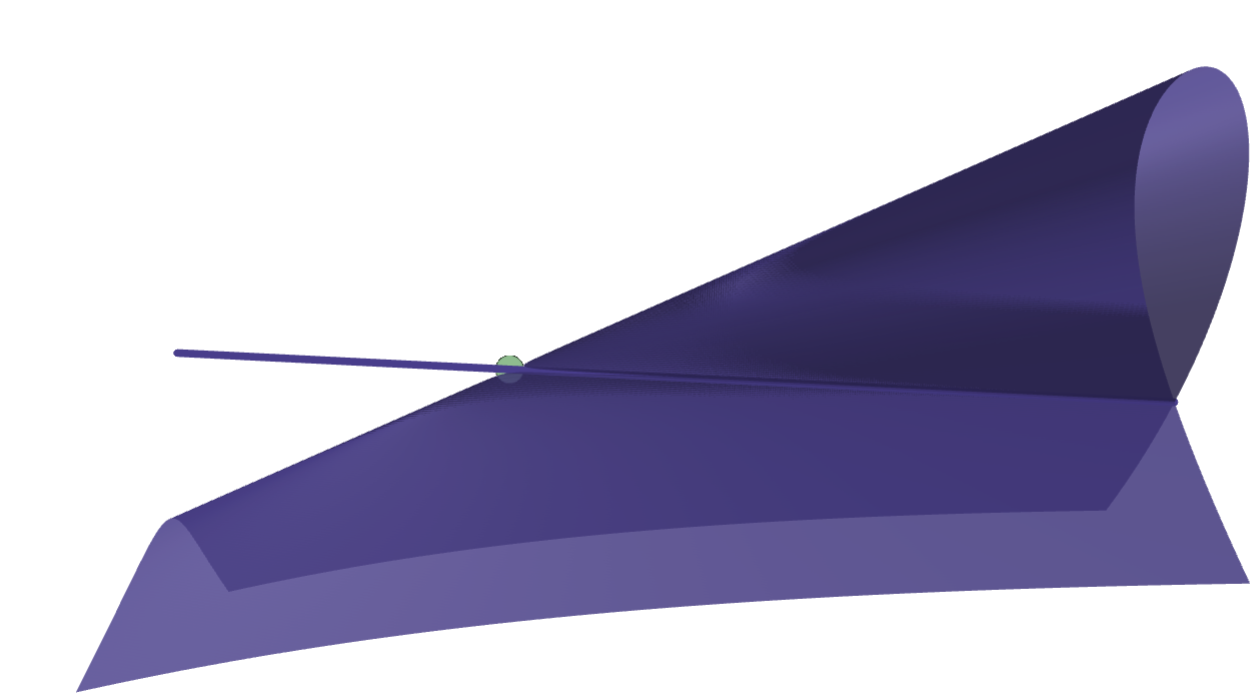}\caption{The surface $X=\bV\left((y-1)^2-(x-z)(x-1)^2 \right)$ in $\RR^3$. It's singular locus is $Y=\bV(x-1, y-1)$ and its Whitney stratification arises from the flag $\{(1,1,1)\} \subset Y \subset X$. The dimension 2 stratum is $X-Y$, the two dimension 1 strata are the connected components of $Y-\{(1,1,1)\}$, and the dimension 0 stratum is $(1,1,1)$. \label{fig:Surface}}
\end{figure}{\em Whitney's Condition (B)} (see Definition \ref{def:condb}) fails for the
pair of manifolds $X-\bV(x-1,y-1)$ and $\bV(x-1,y-1)$ at the point
$(1,1,1)$. The failure of condition (B) at this point captures the fact that a fundamental change in the local geometry of points on $\bV(x-1,y-1)$ relative to the surface $X$ occurs at the point
$(1,1,1)$. As it turns out, this in particular allows us to know exactly
when the topology of the fibers of the map $\pi$ change.

More precisely a classical result known as {\em Thom's Isotopy Lemma} \cite[Proposition 11.1]{Mather2012}
allows us to partition the codomain of (proper) maps into regions of
constant topology; Whitney's condition is a critical component of this
construction as illustrated by the example above \footnote{Note that
  in fact the map $\pi$ in the example is not proper, so Thom's Isopotpy
  lemma does not hold for the affine map, however we can see that the
  result still holds for this example by passing to the projective
  closure in $\PP^2\times \CC$, where the corresponding map is proper.}.

Effective algorithms to compute Whitney statifications of real and
complex algebraic varieties, and of algebraic maps between them, were
developed in \cite{hnFOCM,hnFOCMCorrection,hn2Real}. These algorithms
compute a Whitney stratification of a variety $X$ by way of computing
the set of associated primes of certain (scheme theoretic) fibers in the so-called {\em
  conormal space} associated to $X$ (see Definition \ref{def:conorm}). These fibers
are often much more complicated than their image, making the computation
of the required associated primes difficult even for relatively simple
varieties.
\begin{example} \label{ex:IntroBiggerEx}
  Consider the variety $X:=\bV(x_{1}^{7}-2\,x_{1}^{5}x_{4}+x_{1}^{3}x_{4}^{2}+x_{1}^{2}x_{2}^{2}-x_{1}x_{2}^{2}x_{3}+x_{2}^{3})\subset \CC^4$. The singular
  locus of $X$ is $X_{\rm Sing}=\bV(x_{2},\,x_{1}^{3}-x_{1}x_{4})$. Inside the singular locus Whitney's condition (B) fails to hold on the following subvarieties of dimension 1:
\begin{equation}
    \label{eq:WSex1}\bV\left(x_{4},\,x_{2},\,x_{1}\right),\:\bV\left(x_{2},\,x_{1},\,4\,x_{3}^{3
     }-27\,x_{4}^{2}\right),\:\bV\left(x_{2},\,x_{1}-x_{3},\,x_{3}^{2}-x_{4}\right). 
\end{equation} The three varieties listed in \eqref{eq:WSex1} intersect at the origin. The origin along with the varieties \eqref{eq:WSex1} and $X_{\rm Sing}$, define the Whitney stratification of $X$. As noted above this is computed via studying certain fibers of the conormal space. 
  The conormal space of $X$, ${\rm Con}(X)\subset \CC^4\times \PP^3$, is defined by an ideal generated by 22 polynomials in eight variables, most with between 8 and 10 terms each (for a total of 190 terms across all polynomials). The fiber of the
  singular locus in the conormal space of $X$, obtained by adding the ideal of ${\rm Con}(X)$ to the two equations defining $X_{\rm Sing}$, is specified by a non-radical ideal generated by 20 polynomials
  in eight variables. These polynomials contain between 1 and 16 terms each, for a total of 127 terms across all 20 polynomials.  For this latter ideal we were unable to compute the associated primes (though the minimal primes can be computed relatively quickly). 
\end{example}
In line with this observation, practical experience has shown that the
computation of these associated primes is precisely the main
bottleneck for these algorithms. We note that, theoretically speaking (to apply the known algebraic criterion for condition (B), see Theorem \ref{thm:conorm}),
for the ideals involved it is absolutely essential that we obtain both
the embedded and the isolated primary components. In particular,
algorithms which compute only the isolated primes of an ideal cannot
be used. In this paper we address this bottleneck using a modified
version of a well-known Gröbner basis algorithm for {\em
  equidimensional} decomposition (see e.g. \cite{decker1999}) which
allows us to compute the intersection of all required associated
primes of fibers in some conormal space with a single much simpler
Gröbner basis computation. While the worst case complexity of the
approach is unchanged this, nonetheless, yields quite substantial
speedups in practice on a wide variety of examples.  However, because
we may compute a larger variety, which contains the desired one, this
can yield non-minimal Whitney statifications even when the algorithm
of \cite{hnFOCM,hnFOCMCorrection,hn2Real} would yield a minimal
stratification. While this is not necessarily a problem, in some
applications it may be more desirable to obtain the minimal
stratification. To this end we also use classical results of Tessier
\cite[page 751--752]{FTpolar} based on the concept of {\em local polar
  varieties} to develop a second algorithm which ensures that the
minimal stratification will be produced (at least in the case of
complex varieties).

This paper is organized as follows: in \Cref{sec:background} we
briefly review the necessary background on Gröbner bases, Whitney
stratifications and local polar varieties. The main results, and the
resulting algorithm for computing the Whitney stratification of a
variety and for minimizing a given Whitney stratification, are
presented in \Cref{sec:new}. Finally in \Cref{sec:runtimes} we give
run time comparisons.

\subsection*{Acknowledgements}
{\footnotesize Martin Helmer  was supported by the Air Force Office of Scientific Research (AFOSR) under award
number FA9550-22-1-0462, managed by Dr.~Frederick Leve, and by the Royal Society under grant RSWF\textbackslash R2\textbackslash 242006, and would like to gratefully acknowledge this support. Rafael Mohr was supported by the DFG Sonderforschungbereich TRR 195, by the Forschungsinitiative Rheinland-Pfalz, by the ERC project 10000 DIGITS, by the FWO grants G0F5921N (Odysseus) and G023721N, and by the KU Leuven grant iBOF/23/064 and would like to gratefully acknowledge this support. The authors thank the anonymous reviewer for their valuable comments and suggestions.}

\section{Background}\label{sec:background}
In this section we briefly recall several important facts regarding Gröbner bases,
Whitney stratifications, and the effective version of Whitney's
condition given in \cite{hnFOCMCorrection}. These facts will be needed to
construct our new stratification algorithm in the sequel.

\subsection{Gröbner Bases}
\label{sec:groebner}

Let us briefly state the necessary definitions and properties of
Gröbner bases needed for our algorithm. Just for this section, let
$\field$ be any field and let $R:=\field[\xx]$ be the polynomial ring
over $\field$ in a finite set of variables $\xx$. Denote by
$\mon(\xx)$ the set of monomials in $\xx$.

\begin{definition}[Monomial Order]
  \label{def:monorder} A \emph{monomial order} $\mord$ on $\xx$ is a total
  order on $\mon(\xx)$ which:
  \begin{enumerate}
  \item extends the partial order on $\mon(\xx)$ given by divisibility, and
  \item is compatible with multiplication i.e.\ we have,
    \begin{align*}
      u \mord v \; \Rightarrow \; wu \mord wv \quad \forall u,v,w\in \mon(\xx).
    \end{align*}
  \end{enumerate}
\end{definition}

\noindent A monomial order on $\xx$ yields a notion of \emph{leading monomial}
 and \emph{leading coefficient} in $R$.

\begin{definition}[Leading Monomial]
  \label{def:lm} Let $\mord$ be a monomial order on $\mon(\xx)$. For a
  nonzero element $f\in R$ the \emph{leading monomial} of $f$ with respect to
  $\mord$, denoted $\lm_{\mord}(f)$, is the $\mord$-largest monomial
  in the support of $f$. The \emph{leading coefficient}, denoted
  $\lc_{\mord}(f)$, is the corresponding coefficient of
  $\lm_{\mord}(f)$ in $f$. For a finite set $F$ in $R$ we define
  $\lm_{\mord}(F) := \setof{\lm_{\mord}(f)}{f\in F}$.  For an ideal
  $I$ in $R$ we define the \emph{leading monomial ideal} of $I$ as
  $\lm_{\mord}(I):=\langle \lm_{\mord}(f)\;|\;f\in I\rangle$.
\end{definition}

\noindent We finally define the notion of Gröbner bases.

\begin{definition}[Gröbner Basis]
  \label{def:gb} A \emph{Gröbner basis} of an ideal $I\subset R$ with respect to a
  monomial order $\mord$ is a finite set $G\subset I$ such that
  $\langle \lm_{\mord}(G) \rangle = \lm_{\mord}(I)$. It is called {\em minimal} if
  for any $g\in G$, $\lm_{\mord}(g)$ is not divisible by any element in
  $\lm_{\mord}(G- \{g\})$
\end{definition}

\noindent Any Gröbner basis can be turned into a minimal one by
removing all elements whose leading monomials are divisible by another
leading monomial occuring in the Gröbner basis.

\noindent For our algorithm, we will need to compute Gröbner bases for
\emph{block orders}.

\begin{definition}[Block Order]
  Let $\xx$ and $\yy$ be two finite sets of variables. Write each
  monomial $u\in \mon(\xx \cup \yy)$ uniquely as a product
  $u = u_{\xx}u_{\yy}$ with $u_{\xx}\in \mon(\xx)$ and
  $u_{\yy}\in \mon(\yy)$. Fix a monomial order $\mord_1$ on
  $\mon(\xx)$ and a monomial order $\mord_2$ on $\mon(\yy)$. The
  corresponding \emph{block order eliminating $\xx$} is defined as
  follows: $u \mord v$ if and only if $u_{\xx} \mord_1 v_{\xx}$ or $u_{\xx}=v_{\xx}$
  and $u_{\yy} \mord_2 v_{\yy}$ for $u,v\in \mon(\xx\cup \yy)$.
\end{definition}

The key property of Gröbner bases with respect to block orders that we need is
the following (Lemmas 8.91 and 8.93 in \cite{becker1993}):

\begin{proposition}
  \label{prop:blo}
  Let $\xx$ and $\yy$ be any two finite sets of variables and let
  $\mord$ be any block order eliminating $\xx$. Let $G$ be a Gröbner basis
  of some ideal $I$ with respect to $\mord$. Then
  \begin{enumerate}
  \item $G$ is also a Gröbner basis of the ideal $I\field(\yy)[\xx]$.
  \item Let $H$ be the corresponding minimal Gröbner basis of $G$. Let
    $h\in \field[\yy]$ be the least common multiple of the leading
    coefficients of $H$, regarded as a subset of
    $\field(\yy)[\xx]$. Then
    \[I\field(\yy)[\xx] \cap \field[\xx,\yy] = \sat{I}{h}.\]
  \end{enumerate}
\end{proposition}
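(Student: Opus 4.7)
The plan is to prove the two parts separately, both leveraging how the block order interacts with the decomposition $u = u_{\xx}u_{\yy}$ of monomials. The key preliminary observation I would establish first is that, for any $f\in \field[\xx,\yy]$, if we write $f = \sum_{\alpha} c_{\alpha}(\yy)\,\xx^{\alpha}$ with $c_{\alpha}\in \field[\yy]$, and let $\alpha^{*}$ be the $\mord_{1}$-largest exponent with $c_{\alpha^{*}}\neq 0$, then $\lm_{\mord}(f) = \xx^{\alpha^{*}}\cdot \lm_{\mord_{2}}(c_{\alpha^{*}})$ by definition of the block order. In particular, the $\xx$-part of $\lm_{\mord}(f)$ coincides with $\lm_{\mord_{1}}(f)$ when $f$ is viewed in $\field(\yy)[\xx]$, and the leading coefficient of $f\in \field(\yy)[\xx]$ equals $c_{\alpha^{*}}$ (up to a unit in $\field(\yy)$).

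For part (1), take any nonzero $f\in I\field(\yy)[\xx]$. Clearing denominators yields some $d\in \field[\yy]\setminus\{0\}$ such that $df\in I$. The preliminary observation gives $\lm_{\mord_{1}}(df) = \lm_{\mord_{1}}(f)$, and since $G$ is a $\mord$-Gröbner basis of $I$, there exists $g\in G$ with $\lm_{\mord}(g)\mid \lm_{\mord}(df)$; in particular $\lm_{\mord_{1}}(g)\mid \lm_{\mord_{1}}(f)$ in $\field(\yy)[\xx]$. This shows $G$ generates $\lm_{\mord_{1}}(I\field(\yy)[\xx])$, hence is a Gröbner basis there.

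For part (2), one inclusion is immediate: if $f\in \sat{I}{h}$ then $h^{N}f\in I$ for some $N$, so $f = (h^{N}f)/h^{N}\in I\field(\yy)[\xx]$, and also $f\in \field[\xx,\yy]$ by hypothesis. For the reverse inclusion, I would take $f\in I\field(\yy)[\xx]\cap \field[\xx,\yy]$ and run the division algorithm of $f$ by $H$ in $\field(\yy)[\xx]$ with respect to $\mord_{1}$; by part (1) this terminates with remainder zero, giving $f = \sum_{i} q_{i} h_{i}$ with $q_{i}\in \field(\yy)[\xx]$. The crucial bookkeeping is that every denominator introduced during a reduction step is the leading coefficient $\lc_{\mord_{1}}(h_{i})\in \field[\yy]$ of some $h_{i}\in H$; since $h$ is the lcm of all these leading coefficients, each individual denominator divides $h$. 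A careful induction on the number of reduction steps shows that the common denominator of all the $q_{i}$ divides $h^{N}$ for some $N$, and multiplying through gives $h^{N}f = \sum_{i}(h^{N}q_{i})h_{i}\in I$, i.e.\ $f\in \sat{I}{h}$.

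The routine part is the preliminary observation and the first inclusion in part (2); the main obstacle is the bookkeeping on denominators in the division algorithm for part (2). One has to argue that the denominators appearing in the cofactors $q_{i}$ really do accumulate as products of the $\lc_{\mord_{1}}(h_{i})$, and no worse, so that their lcm is controlled by $h$. A clean way to handle this is to induct on the number of reduction steps, showing that after $k$ steps the intermediate remainder, multiplied by a suitable divisor of $h^{k}$, lies in $\field[\xx,\yy]$, and then invoking termination of the division algorithm to bound $k$ in terms of $f$ and $H$.
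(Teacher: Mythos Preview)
Your argument is correct. The paper does not supply its own proof of this proposition; it simply cites Lemmas~8.91 and~8.93 of Becker--Weispfenning, so there is no in-paper proof to compare against. What you have written is essentially the standard textbook argument: for part~(1), the key point that the $\xx$-part of $\lm_{\mord}(f)$ equals $\lm_{\mord_1}(f)$ over $\field(\yy)$ is exactly right, and for part~(2), tracking the denominators through the division algorithm and bounding them by powers of $h$ is the expected route. Your induction on reduction steps (showing $h^k r_k\in\field[\xx,\yy]$ and hence $h^N q_i\in\field[\xx,\yy]$) goes through cleanly; just be sure when you write it up to track the partial quotients $q_i$ alongside the intermediate remainders, since it is the $q_i$ (not the final remainder, which is zero) that deliver $h^N f=\sum (h^N q_i)h_i\in I$.
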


We will want to compute Gröbner bases of ideals of the form
$I\field(\yy)[\xx]$ when $\yy$ is chosen such that the map
$\field[\yy]\rightarrow \field[\xx,\yy]/I$ is injective. To this end, define:

\begin{definition}[Maximally Independent Subset]
  \label{def:mis} A {\em maximally independent subset} $\uu\subset \xx$ of an
  ideal $I\subset \field[\xx]$ is a set such that the map
  $\field[\uu]\rightarrow \field[\xx]/I$ is injective and such that the
  cardinality of $\uu$ is equal to $\dim(\field[\xx]/I)$.
\end{definition}

Computationally, we will determine maximally independent subsets from
any Gröbner basis of $I$ (Theorem 9.27 in \cite{becker1993}):

\begin{proposition}
  \label{prop:mis} Let $G$ be any Gröbner basis of an ideal
  $I\subset \field[\xx]$. Let $\uu\subset \xx$ be maximal such that no leading
  monomial of any $g\in G$ depends only on $\uu$. Then $\uu$ is a
  maximally independent subset.
\end{proposition}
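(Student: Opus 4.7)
The plan is to verify the two defining requirements of a maximally independent subset from \Cref{def:mis}: that the natural map $\field[\uu] \rightarrow \field[\xx]/I$ is injective, and that $|\uu| = \dim(I)$. The first is a direct consequence of the Gröbner basis property, while the second follows from two standard results of Gröbner basis theory that I will invoke rather than reprove.

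For injectivity, I would argue by contradiction. Suppose there exists a nonzero $f \in I \cap \field[\uu]$. Then $\lm_{\mord}(f)$ is a monomial depending only on the variables in $\uu$. Since $G$ is a Gröbner basis of $I$, we have $\lm_{\mord}(f) \in \langle \lm_{\mord}(G) \rangle$, and hence some $g \in G$ satisfies $\lm_{\mord}(g) \mid \lm_{\mord}(f)$. But then $\lm_{\mord}(g)$ is itself a monomial in $\field[\uu]$, which contradicts the hypothesis that no leading monomial of an element of $G$ depends only on $\uu$. Hence $I \cap \field[\uu] = \langle 0 \rangle$, giving the desired injectivity.

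For the dimension equality, I would chain together two classical facts. First, passing to the initial ideal preserves Krull dimension, so $\dim(\field[\xx]/I) = \dim(\field[\xx]/\lm_{\mord}(I))$. Second, for a monomial ideal $M \subset \field[\xx]$, the Krull dimension $\dim(\field[\xx]/M)$ equals the largest cardinality of a subset $\vv \subset \xx$ with $M \cap \field[\vv] = \langle 0 \rangle$; and since $M$ is monomial, this latter condition is equivalent to saying no minimal monomial generator of $M$ lies in $\field[\vv]$. Applying this to $M = \lm_{\mord}(I) = \langle \lm_{\mord}(G)\rangle$, the maximum in question is taken over subsets $\vv \subset \xx$ such that no $\lm_{\mord}(g)$ for $g\in G$ depends only on $\vv$. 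By the defining maximality of $\uu$, this maximum equals $|\uu|$, so $|\uu| = \dim(\field[\xx]/I) = \dim(I)$.

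The only real obstacle is the appeal to the equality $\dim(\field[\xx]/I) = \dim(\field[\xx]/\lm_{\mord}(I))$ and to the combinatorial description of the Krull dimension of a monomial quotient; however, both are classical and are proved in \cite{becker1993} (Chapter 9), so once they are invoked the argument reduces to the two short observations above.
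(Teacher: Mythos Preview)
The paper does not supply its own proof of this proposition; it simply records it as Theorem~9.27 of \cite{becker1993}. Your argument is essentially the standard one underlying that reference, and the injectivity half is entirely correct.

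There is, however, a genuine gap in your dimension argument. Your final step reads ``By the defining maximality of $\uu$, this maximum equals $|\uu|$,'' which silently conflates \emph{inclusion-maximal} with \emph{maximum cardinality}. These do not coincide in general. Take $I = \langle xy,\, yz\rangle \subset \field[x,y,z]$; then $G = \{xy,\, yz\}$ is a Gr\"obner basis for any monomial order (the single S-polynomial vanishes). The set $\uu = \{y\}$ satisfies the hypothesis --- neither $xy$ nor $yz$ lies in $\field[y]$ --- and is inclusion-maximal, since adjoining $x$ gives $xy \in \field[x,y]$ and adjoining $z$ gives $yz \in \field[y,z]$. Yet $\dim(I) = 2$ (the component $\bV(y)$ is a plane), so $|\uu| = 1 \neq \dim(I)$, and $\uu$ fails \Cref{def:mis}.

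Thus if ``maximal'' in the statement is read as inclusion-maximal, the proposition is false as written and no proof can succeed; if it is read as ``of maximum cardinality'' --- which is what the algorithm actually needs and what the cited result delivers --- then your argument is correct. Either way, you should make explicit that you are using the maximum-cardinality reading, since the inference you draw does not follow from inclusion-maximality alone.
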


\subsection{Whitney Stratifications}
Throughout the paper, let $\CC^n$ be the $n$-dimensional affine space
over the complex numbers $\CC$, $\PP^n$ be the $n$-dimensional
projective space over $\CC$ with dual projective space
$(\PP^{n})^{*}$, and let $\Gr(k,n)$ denote the projective variety of
$k$-dimensional vector spaces in $\CC^n$. We assume throughout the
rest of the paper that all varieties in $\CC^n$ are defined over
$\QQ$, i.e. cut out by $n$-variate polynomials with coefficients in
$\QQ$. For any variety $X\subset \CC^n$ we will denote by $I(X)$ the radical
ideal of all polynomials vanishing on $X$. We will frequently use the
notation $I_X$ to denote any ideal with $\bV(I_X) = X$. Note that such
an ideal $I_X$ is of course not uniquely defined.

Our goal is to compute a Whitney stratification of a singular variety
$X\subset \CC^n$, which is a certain flag on $X$ where the
successive differences between varieties in the flag are smooth and where these smooth pieces satisfy an additional regularity condition. This additional regularity condition, called {\em Whitney's Condition (B)} is designed to enforce that the local structure of each connected component of each smooth piece of the stratification is geometrically identical in a precise way. 

\begin{definition}[Whitney's Condition (B)]
  \label{def:condb}
  Let $Y\subset X$ be algebraic varieties in $\CC^n$ with $\dim(Y)< \dim(X)$ and with $X-Y$ smooth. Let $Y_{\rm reg}$ denote the smooth part of $Y$. The pair $(X,Y)$
  satisfies {\em Whitney's condition (B)} if for any $p\in Y_{\rm reg}$
  \begin{itemize}
  \item and any sequences $(x_i)\subset X$ and $(y_i) \subset Y_{\rm reg}$ converging to $p$,
  \item if the secant lines $\ell_i=[x_i,y_i]\in \PP^{n-1}$ converge to some $\ell \in \PP^{n-1}$,
  \item and if the tangent spaces $T_{x_i}(X-Y)\in \Gr(\dim(X), n)$ converge
    to some $T\in \Gr(\dim(X), n)$,
  \end{itemize}
  then $\ell \subset T$.
\end{definition}

This now allows to formally define
a Whitney stratification of a variety $X\subset \CC^n$, see e.g.~\cite{wallregular,whitney1965tangents}:

\begin{definition}[Whitney Stratification]
  A {\em Whitney stratification} of $X$ is a flag
  \[\emptyset = W_{-1} \subset W_0 \subset \dots \subset W_{k-1} \subset W_k = X\]
  where each difference $M_i:=W_i- W_{i-1}$ is a smooth variety. The
  connected components of each $M_i$ are called {\em strata} and each
  pair of strata must satisfy Whitney's condition (B). Note that the condition that $M_i$ is smooth in particular means that for each $W_i$ we must have that the singular locus  of $W_i$ is contained in $W_{i-1}$, i.e.~$(W_i)_{\rm Sing}\subset W_{i-1}$.

  A {\em minimal} Whitney stratification of $X$ is a Whitney
  stratification such that, after removing any stratum, the resulting
  stratification fails to be a Whitney stratification. The existence of a unique minimal stratification follows from results of \cite{tessier1981varieties}, see also \cite{FTpolar}. 
\end{definition}

In \cite{hnFOCM} an algebraic criterion based on primary decomposition
of polynomial ideals is given that allows to check computationally if
a given pair $X,Y\subset \CC^n$ satisfies condition (B), and as such, to
compute a Whitney stratification of a given variety by successively
computing singular loci and then checking condition (B) for all pairs
of resulting varieties. The central contribution of this present work
is to still apply the same algebraic criterion but to give a simple
way in which having to explicitly compute primary decompositions can
nonetheless be avoided, see \Cref{sec:newcrit}.

Let us now fix an {\em equidimensional}, affine variety $X\subset \CC^n$, where
equidimensional means that all irreducible components of $X$ have the
same dimension. We now introduce the aforementioned algebraic
criterion, which will use the notion of a {\em conormal space}.
\begin{definition}[Conormal Space]
  \label{def:conorm}
  Denote by $\reg{X}$ the set of regular points of $X$. The {\em
    conormal space} $\Con(X)\subset \CC^n\times (\PP^{n-1})^{*}$ of
  $X$ is the Zariski closure of the set
  \[\setof{(p,\zeta)}{p\in \reg{X}\text{ and }T_p\reg{X}\subset \zeta^\perp }.\]
  The canonical projection $\kappa_X:\Con(X)\rightarrow X$ is called the {\em
    conormal map}.
\end{definition}

This notion was used in \cite{hnFOCM} to give the aforementioned
algebraic criterion for condition (B), which we restate below\footnote{We note that in \cite{hnFOCM} this theorem is stated and proved (in the correction \cite{hnFOCMCorrection} to \cite{hnFOCM}) for the case where $X$ (and hence $Y$) are projective varieties. However the restriction to the projective case is not necessary and the same theorem holds, with exactly the same proof in the affine case (specifically the proof of Theorem 2.1 in \cite{hnFOCMCorrection}, which is the corrected version of Theorem 4.3 in \cite{hnFOCM}) . Hence we do not redo the proof here and simply re-phrase the final result to the affine setting.}.

\begin{theorem}[Theorem 4.3 in \cite{hnFOCM}]
  \label{thm:conorm}
  Let $\emptyset \neq Y\subset \sing{X}$ be equidimensional and let
  $I_Y$ be any ideal with $\bV(I_Y) = Y$. Let
  \[I_Y + I(\Con(X)) = \bigcap_{i\in I} Q_i\] be a primary decomposition. Let
  \[J:=\setof{i\in I}{\dim(\kappa_X(\bV(Q_i))) < \dim (Y)}.\]
  Further, let
  \[A:= \left[\bigcup_{j\in J}\kappa_X(\bV(Q_i)) \right]\cup \sing{Y}.\] Then the pair
  $(\reg{X}, Y- A)$ satisfies condition (B).
\end{theorem}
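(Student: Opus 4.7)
The strategy is to translate Whitney's condition (B) into an equivalent statement about the fibers of the conormal map $\kappa_X$, and then to read the failure locus of (B) off a primary decomposition of the ideal defining the relevant fiber.

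\emph{Step 1: Conormal reformulation of (B).} My first task would be to establish (or invoke) the classical Thom--Teissier-type equivalence: for $p\in \reg{Y}$, the pair $(\reg{X},\reg{Y})$ satisfies condition (B) at $p$ if and only if every irreducible component of the scheme-theoretic preimage $\kappa_X^{-1}(Y)\subset \Con(X)$ whose support meets $\kappa_X^{-1}(p)$ projects \emph{onto} $Y$, equivalently has image of dimension $\dim(Y)$ under $\kappa_X$. The geometric intuition is that a secant limit $\ell=\lim[x_i,y_i]$ together with a limit tangent $T=\lim T_{x_i}\reg{X}$ produces a point $(p,\zeta)\in \Con(X)$ with $\zeta\supset\ell$, and the failure $\ell\not\subset T$ can only occur when $(p,\zeta)$ lies on a component of $\kappa_X^{-1}(Y)$ whose $\kappa_X$-image is a proper subvariety of $Y$; conversely, if every such component dominates $Y$, then the tangent hyperplanes to $\reg{X}$ accumulating at $p$ are forced to contain every tangent direction of $Y$ at $p$, giving $\ell\subset T$.

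\emph{Step 2: From the scheme to its primary components.} The scheme $\kappa_X^{-1}(Y)$ is cut out on $\Con(X)$ by the pullback of $I_Y$, so it is defined exactly by $I_Y+I(\Con(X))$. A primary decomposition $\bigcap_{i\in I} Q_i$ of this ideal exposes all its scheme-theoretic components, both minimal and embedded; the corresponding subvarieties $\bV(Q_i)$ are precisely the pieces whose behavior Step~1 asks us to monitor. Stratifying these components by the dimension of their image under $\kappa_X$, the index set $J$ identifies exactly the ``non-dominating'' components, and $\bigcup_{j\in J}\kappa_X(\bV(Q_j))$ is the candidate locus in $Y$ where condition (B) can fail.

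\emph{Step 3: Verification outside $A$.} For any $p\in Y\setminus A$, one has $p\in \reg{Y}$ (since $\sing{Y}\subset A$) and $p\notin \kappa_X(\bV(Q_j))$ for every $j\in J$ (by definition of $A$). Consequently, every primary component of $\kappa_X^{-1}(Y)$ whose support meets $\kappa_X^{-1}(p)$ must have index outside $J$, hence must dominate $Y$. By Step~1 this means exactly that condition (B) holds at $p$ for the pair $(\reg{X}, Y\setminus A)$, which is the desired conclusion.

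\emph{Main obstacle.} The delicate point I anticipate is justifying why \emph{embedded} primary components cannot be discarded. Set-theoretically their supports are already contained in the union of the minimal components, so a naive reduction would suggest they carry no geometric information; however, the reformulation in Step~1 is genuinely scheme-theoretic, and an embedded prime whose variety lies in a proper subvariety of $Y$ still witnesses failure of (B) along that subvariety because it records infinitesimal tangent data that is invisible to the reduced structure. Pinning this down rigorously---via a specialization-of-tangents argument sensitive to associated, not merely minimal, primes---is the step I expect to be the hardest, and it is precisely what forces the computation to involve full associated-prime information rather than just isolated components.
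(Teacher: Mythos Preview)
The paper does not prove this statement: it is quoted as Theorem~4.3 of \cite{hnFOCM} and used throughout as a black box (most visibly in the proof of the paper's Theorem~3.2). There is therefore no in-paper proof to compare your proposal against.

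For completeness: your outline tracks the route taken in the cited source. One invokes the conormal criterion for Whitney regularity (L\^e--Teissier, Henry--Merle), identifies $\kappa_X^{-1}(Y)$ with the scheme cut out by $I_Y+I(\Con(X))$, and then argues that the locus in $Y$ where condition~(B) can fail is contained in the union of the $\kappa_X$-images of those associated primes whose image has dimension strictly less than $\dim Y$. Your emphasis on the role of embedded primes is well placed and mirrors the present paper's own remarks in the introduction. One caution: your Step~1 is phrased as a biconditional in terms of dominating components, which is not literally the classical fiberwise inclusion $\kappa_X^{-1}(p)\subset\kappa_Y^{-1}(p)$; only one implication is needed here, and passing from the classical criterion to your component-wise reformulation is precisely the technical content that the original proof in \cite{hnFOCM} (and its correction \cite{hnFOCMCorrection}) supplies.
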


\subsection{Local Polar Varieties and Minimal Whitney Stratifications}

While our new algebraic criterion for Whitney's condition (B) avoids
the computation of primary decompositions, and thus speeds up the
stratification process, it may fail to produce a minimal Whitney
stratification even when the algorithm in \cite{hnFOCM} does. In order
to rectify this, we will use the concept of {\em local polar
  varieties} which we introduce here.

Let $X\subset \CC^n$ be a variety of dimension $d$ and consider the
conormal space $\Con(X)\subset \CC^n \times \PP^{n-1}$ with associated conormal
map $\kappa_X: \Con(X)\to X$. For fixed $i$ with $1\leq i \leq d-1$, consider a
dimension $d+i-1$ linear space in $\CC^n$ with dual
$L_i\subset (\PP^{n-1})^{*}$. We use the convention $L_0=\CC^n\times (\PP^{n-1})^{*}$.
\begin{definition}[Polar Variety]
  A {\em polar variety} is a variety of the form
$$ \delta_i(X):=\kappa_X(\Con(X) \cap L_i).$$
\end{definition}

Consider now a point $y\in X$. Picking our linear spaces as general
linear spaces through $y$ with dual $\tilde{L}_i\subset \PP^{n-1}$ we define
\begin{definition}[Local Polar Variety]
  A {\em codimension $i$ local polar variety through $y$} is a variety
  of the form
  \[\delta_i(X,y) := \kappa_X(\Con(X) \cap \tilde{L}_i).\]
\end{definition}

Since the conormal space is reduced (as a scheme), i.e.~is a variety \cite[Proposition 2.9]{FTpolar},  and since the $L_i$ (and the $\tilde{L}_i$) are general it follows that the local polar varieties are also reduced \cite[Remark~3.10 (b)]{FTpolar} and are hence are each defined by a radical ideal. 

If our linear space $\tilde{L}_i$ are chosen sufficiently generic,
then, using a dimension count, some of the local polar varities will
contain $y$ and some will not (see \cite[Remark 3.1]{FTpolar}). We
will want to compute the {\em multiplicity} of points $y$ in local
polar varieties $\delta_i(X,y)$. Let us define the notion of multiplicity
used here:

\begin{definition}[Multiplicity]
  Let $Z\subset \CC^n$ be a subvariety and let $z\in \CC^n$ and write
  $\mathfrak{m}_z:=I(z)$. If $z\in Z$, the {\em multiplicity} of $Z$ at
  $z$, denoted $m_z(Z)$, is defined as the Hilbert-Samuel multiplicity
  (see e.g. Chapter 12 in \cite{eisenbud1995}) of the local ring
  $(\CC[\xx]/I(Z))_{\mathfrak{m}_z}$ at $\mathfrak{m}_z$. If $z\notin Z$ we define $m_z(Z):=0$.
\end{definition}

For a subvariety $Y\subset X$ and a given point
$y\in Y$, computing multiplicities of the form $m_y(\delta_i(X,y))$ can be
used to check if Whitney's condition (B) holds for $X$ and $Y$ at $y$.
More precisely (see p. 69 and Proposition 3.6 in \cite{FTpolar}):
\begin{theorem}
  \label{thm:multseq}
  Let $Y\subset X$ be a subvariety and $y\in Y$. Then the sequence
  \[m_{\bullet}(X,x) :=
    (m_x(X),m_x(\delta_1(X,x)),\dots,m_x(\delta_{\dim(X)-1}(X,x)))\] is
  independent of the linear subspaces chosen to construct the local
  polar varieties if they are sufficiently general. In addition,
  Whitney's condition (B) is satisfied at $y$ for $X$ and $Y$ if the
  sequence takes the same value for every $x$ in a euclidean
  neighborhood of $y$ in $Y$.
\end{theorem}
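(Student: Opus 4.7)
The plan is to split the theorem into the two independent assertions and handle them separately, since each requires quite different machinery.

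For the generic independence of the sequence $m_\bullet(X,x)$, I would parameterize the choice of linear subspaces. Fix $x\in X$ and consider the product of Grassmannians $\mathbf{G} = \prod_{i=1}^{\dim(X)-1}\Gr(n-d-i+1, n)$ parameterizing tuples $(\tilde{L}_1,\ldots,\tilde{L}_{d-1})$ of linear spaces through $x$ of the required codimensions. The construction $\delta_i(X,x) = \kappa_X(\Con(X)\cap \tilde{L}_i)$ gives a family $\Delta_i\to \mathbf{G}$ whose fibers are the local polar varieties, so the assignment $\tilde{L}_i \mapsto m_x(\delta_i(X,x))$ is a constructible function on $\mathbf{G}$. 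By upper semicontinuity of Hilbert--Samuel multiplicity in flat families (together with the fact that the generic fiber of $\Delta_i\to \mathbf{G}$ is flat near $x$ over a dense open), this function achieves its minimum value on a Zariski dense open subset $U_i\subset \mathbf{G}$. Intersecting the $U_i$ gives a single dense open subset on which the whole sequence is constant; this common value is then, by definition, the ``generic'' value assigned to $m_\bullet(X,x)$.

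For the second assertion, I would follow Teissier's approach via the normal/conormal diagram. The key reformulation is that Whitney's condition (B) for $(\reg X, Y)$ at $y$ is equivalent to a statement about the relative dimension of certain fibers in the double blowup $E_{C_YX}\Con(X)$ (blowing up $\Con(X)$ along the pull-back of $Y$, then following by the normal cone construction along $Y$). The non-satisfaction of (B) at $y$ manifests as the appearance of an extra component in the exceptional fiber, which in turn forces a jump in the dimension of $\kappa_X^{-1}(y)\cap \tilde L_i$ for some codimension $i$. Translating this dimension jump back through the definition of $\delta_i(X,x)$ via the projection formula produces a corresponding jump in $m_x(\delta_i(X,x))$ as $x$ approaches $y$ along $Y$. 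Thus, constancy of the full sequence along a euclidean neighborhood of $y$ in $Y$ forces condition (B) at $y$.

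The main obstacle in the whole program is the second part: relating the numerical invariant $m_x(\delta_i(X,x))$ to the geometric condition (B) requires controlling limits of tangent spaces along $Y$, which is precisely what the conormal modification is designed to do. The technical heart is showing that equimultiplicity of \emph{every} local polar variety of every intermediate codimension is enough to eliminate all possible bad limit planes in $T_X$; a single polar variety is not sufficient because condition (B) mixes secant-line directions with tangent spaces of all intermediate dimensions, and each polar multiplicity controls exactly one ``slice'' of this limit. I would expect to spend most of the work verifying that no limiting tangent configuration violating $(\mathrm{B})$ can survive when the entire $m_\bullet$-sequence is locally constant, which is the content of Teissier's specialization theorem and is the deepest input from \cite{FTpolar}.
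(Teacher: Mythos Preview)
The paper does not prove this theorem at all: it is stated as a classical result of Teissier, with the citation ``see p.~69 and Proposition~3.6 in \cite{FTpolar}'' serving in lieu of proof. Your proposal is therefore not competing against any argument in the paper; rather, you are sketching (a version of) the argument behind the cited reference. In that limited sense your approach and the paper's ``proof'' coincide, since both ultimately rest on \cite{FTpolar}.

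On the content of your sketch: the first part (generic independence of $m_\bullet(X,x)$ from the choice of linear spaces) is reasonable in spirit, though the semicontinuity argument needs care---the family $\Delta_i\to\mathbf{G}$ is not obviously flat near $x$, and the multiplicity function is upper semicontinuous only after one controls the dimension of the polar varieties, which itself requires a genericity argument (this is closer to what Teissier actually does). For the second part you correctly identify that the real work is Teissier's equivalence between Whitney~(B) and equimultiplicity of all local polar varieties, and you explicitly invoke \cite{FTpolar} for it. That is honest, but it means your ``proof'' of the second assertion is really a citation dressed up as an outline: the step from ``a jump in some fiber dimension of $\kappa_X^{-1}(y)\cap\tilde L_i$'' to ``a jump in $m_x(\delta_i(X,x))$'' is precisely the hard direction, and your sketch does not supply it independently of Teissier. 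Since the paper itself makes no pretense of proving this and simply cites the result, your treatment is if anything more detailed than the paper's, but it is not a self-contained proof.
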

Making this theorem computationally profitable will require us to
compute sequences $m_{\bullet}(X,y)$ where $y$ lies in a subvariety
$Y\subset X$. We will show how to do this in \Cref{sec:min}.

\section{New Whitney Stratification Algorithms}\label{sec:new}
In this section we present the main results of this work, namely a new algebraic condition to identify subvarieties where condition (B) fails, new algorithms to compute a Whitney stratification of an algebraic variety and an algorithm to produce the unique minimal Whitney stratification for complex varieties along with theoretical results which guarantee the correctness of our algorithms. Our new formulation of Condition B is given in \Cref{sec:newcrit}, the resulting stratification is given in \Cref{sec:alg}, and finally an algorithm to coarsen a given Whitney stratification to the unique minimal stratification is given in \Cref{sec:min}.

\subsection{A New Algebraic Criterion for Checking Whitney's Condition}\label{sec:newcrit}

Throughout this section we denote by $\CC[\xx]$ the underlying
polynomial ring of $\CC^n$. We will study the problem of effectively computing a Whitney stratification of a complex affine algebraic variety $X\subset \CC^n$. We first show how to, in the setting of Theorem \ref{thm:conorm}, compute a
superset of $A$ without having to compute a primary decomposition of
$I_Y+ I(\Con(X))$. For this we denote by $\CC[\xx,\yy]$ the underlying
polynomial ring of $\CC^n\times \PP^{n-1}$, where $\PP^{n-1}$ denotes the
$(n-1)$-dimensional projective space over $\CC$.

First recall the following elementary proposition from commutative
algebra:

\begin{proposition}
  \label{prop:dim} Let $P\subset \CC[\xx]$ be a prime ideal and let
  $\uu\subset \xx$ be of cardinality $d$.  If $P\cap \CC[\uu] = 0$, then
  $\dim(\CC[x]/P) \geq d$.
\end{proposition}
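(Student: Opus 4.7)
The plan is to reduce the statement to the standard fact that the Krull dimension of a finitely generated domain over a field equals the transcendence degree of its fraction field. Since $P$ is prime, the quotient $\CC[\xx]/P$ is an integral domain that is finitely generated as a $\CC$-algebra, so
\[\dim(P) = \dim(\CC[\xx]/P) = \mathrm{trdeg}_\CC\bigl(\mathrm{Frac}(\CC[\xx]/P)\bigr).\]
With this in hand, it will suffice to exhibit $d$ algebraically independent elements of $\CC[\xx]/P$ over $\CC$.

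First I would consider the composition
\[\varphi: \CC[\uu] \hookrightarrow \CC[\xx] \twoheadrightarrow \CC[\xx]/P.\]
The kernel of $\varphi$ is exactly $P\cap \CC[\uu]$, which is $0$ by hypothesis. Hence $\varphi$ is injective, so the images of the $d$ variables in $\uu$ are algebraically independent over $\CC$ inside the domain $\CC[\xx]/P$, and therefore also inside its fraction field.

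Combining the two observations gives
\[\dim(P) = \mathrm{trdeg}_\CC\bigl(\mathrm{Frac}(\CC[\xx]/P)\bigr) \geq d,\]
as claimed. There is no real obstacle here: the only thing to invoke is the equality of Krull dimension and transcendence degree for affine domains (a consequence of Noether normalization), which is standard and can be cited from any commutative algebra reference such as Eisenbud's book. The argument is essentially one line once that fact is in place.
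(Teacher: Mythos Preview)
Your argument is correct. Both you and the paper begin by observing that the hypothesis $P\cap\CC[\uu]=0$ makes the natural map $\CC[\uu]\to\CC[\xx]/P$ injective. From there the paper argues that a maximal chain of primes in $\CC[\uu]$ lifts to a chain in $\CC[\xx]/P$, giving $\dim(\CC[\xx]/P)\ge d$; you instead read off $d$ algebraically independent elements in the quotient and invoke the identity $\dim=\mathrm{trdeg}_\CC$ for affine domains. The two routes are equally standard, but yours is a bit more self-contained: the chain-lifting step in the paper's version is not a consequence of injectivity alone (lying-over requires integrality), so it implicitly leans on the same dimension theory of finitely generated $\CC$-algebras that your transcendence-degree argument makes explicit.
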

\begin{proof}
  This follows from the definition of Krull dimension of an ideal: Any
  maximal chain of prime ideals in $\CC[\uu]$ extends to a chain of
  prime ideals in $\CC[\xx]/P$ since $\CC[\uu]\rightarrow \CC[\xx]/P$ is
  injective.
\end{proof}

 Using \Cref{thm:conorm} we obtain an
alternative algebraic criterion which will also allow us to construct
a Whitney stratification. When reading the result below keep in mind that $\xx, \uu,$ and $\yy$ (and hence $\xx-\uu$) denote sets of variables. 
\begin{theorem}
  \label{cor:whitpnts} Let $X\subset \CC^n$ be an algebraic variety and let $Y$ be any (not necessarily equidimensional) subvariety of the
singular locus $\sing{X}$ of $X$.
  Choose any maximal independent subset of variables $\uu\subset \xx$ of any
  defining ideal $I_Y$ of $Y$. Let $J:=I(\Con(X)) + I_Y$. Let $G$ be a
  minimal Gröbner basis of ${J\CC(\uu)[\xx- \uu,\yy]}$ with respect to any
  monomial order $\mord$ with $G\subset \CC[\xx,\yy]$. Let
  \[h = \lcm\setof{\lc_{\mord}(g)}{g\in G}\in \CC[\uu].\]
  Then
  \begin{enumerate}
  \item \label{en:1} $Y - \bV(h)$ is equidimensional of dimension $\dim(Y)$.
   \item \label{en:2} The pair $(X,Y- (\bV(h)\cup \sing{Y}))$ satisfies
    Whitney's condition (B).
  \end{enumerate}
\end{theorem}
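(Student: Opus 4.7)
My plan is to first isolate a single algebraic fact from the definition of $h$ which will drive both conclusions. Invoking Proposition \ref{prop:blo}(2) with the block order eliminating $(\xx-\uu)\cup\yy$ yields
\[\sat{J}{h}=J\,\CC(\uu)[\xx-\uu,\yy]\cap\CC[\xx,\yy].\]
Writing $J=\bigcap_iQ_i$ for a primary decomposition with associated primes $\mathfrak{p}_i=\sqrt{Q_i}$, one checks that $Q_i$ extends to the unit ideal in $\CC(\uu)[\xx-\uu,\yy]$ precisely when $\mathfrak{p}_i\cap\CC[\uu]\neq 0$ (since any nonzero polynomial in $\CC[\uu]$ becomes a unit, and conversely clearing denominators in any putative expression for $1$ produces such an element). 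Hence $\sat{J}{h}=\bigcap_{\mathfrak{p}_i\cap\CC[\uu]=0}Q_i$, so $\mathfrak{p}_i\cap\CC[\uu]\neq 0 \iff h\in\mathfrak{p}_i \iff \kappa_X(\bV(\mathfrak{p}_i))\subset\bV(h)$. Applying Proposition \ref{prop:dim} to the contracted prime $\mathfrak{p}_i\cap\CC[\xx]$ then gives the \emph{crucial fact}: any associated prime $\mathfrak{p}_i$ of $J$ with $\dim(\kappa_X(\bV(\mathfrak{p}_i)))<|\uu|=\dim(Y)$ satisfies $\kappa_X(\bV(\mathfrak{p}_i))\subset\bV(h)$.

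Part (1) then follows quickly: for any minimal prime $P$ of $I_Y$ with $\dim(\bV(P))<\dim(Y)$, surjectivity of $\kappa_X\colon\Con(X)\to X$ together with the fact that $\bV(P)$ is a maximal irreducible subvariety of $Y=\kappa_X(\bV(J))$ produces a minimal prime $\mathfrak{p}_i$ of $J$ with $\kappa_X(\bV(\mathfrak{p}_i))=\bV(P)$; the crucial fact gives $\bV(P)\subset\bV(h)$, so every component of $Y$ of dimension less than $\dim(Y)$ is absorbed into $\bV(h)$. For part (2), fix $p\in Y\setminus(\bV(h)\cup\sing{Y})$; being a smooth point of $Y$, $p$ lies on a unique component $Y_P=\bV(P)$ of $Y$, top-dimensional by (1). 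Prime avoidance applied at $\mathfrak{m}_p$ furnishes $g\in\bigcap_{k:P_k\neq P}P_k$ with $g(p)\neq 0$, and so $(I_Y)_g=P_g$; consequently $J$ and $J_P:=P+I(\Con(X))$ have equal localization at $g$, so their associated primes not containing $g$ -- in particular those whose $\kappa_X$-images contain $p$ -- coincide, with identical varieties. Since $Y_P$ is equidimensional, Theorem \ref{thm:conorm} yields condition (B) for $(\reg{X},Y_P\setminus A_P)$, with $A_P=\sing{Y_P}\cup\bigcup_{\mathfrak{p}\text{ bad}}\kappa_X(\bV(\mathfrak{p}))$ and ``bad'' meaning $\dim(\kappa_X(\bV(\mathfrak{p})))<\dim(Y_P)$. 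Any bad associated prime of $J_P$ passing through $p$ corresponds to a bad associated prime of $J$, whose $\kappa_X$-image lies in $\bV(h)$ by the crucial fact -- contradicting $p\notin\bV(h)$. Combined with $\sing{Y_P}\subset\sing{Y}$, this shows $p\notin A_P$; and since locally near $p$ the pair in (2) agrees with $(\reg{X},Y_P\setminus A_P)$, condition (B) holds at $p$, as required.

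The main obstacle I anticipate is that Theorem \ref{thm:conorm} requires an equidimensional $Y$, which ours need not be. The localization trick in part (2) -- reducing, near a smooth point $p\in Y$, to the single irreducible component $Y_P$ through $p$ -- is the workaround, and preserving the relevant associated primes of $J$ under this localization is what allows the crucial fact from the first paragraph to be invoked. A secondary subtlety is that the bad set in Theorem \ref{thm:conorm} involves all primary components (including embedded ones), but because our characterization $h\in\mathfrak{p}_i\iff\mathfrak{p}_i\cap\CC[\uu]\neq 0$ is phrased solely in terms of the associated prime, it applies uniformly to both minimal and embedded primaries.
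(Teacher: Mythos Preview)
Your derivation of the ``crucial fact'' is exactly the paper's argument: combine Proposition~\ref{prop:blo} with the behaviour of primary components under localization at $\CC[\uu]\setminus\{0\}$ to conclude that every associated prime $\mathfrak{p}_i$ of $J$ with $\dim\kappa_X(\bV(\mathfrak{p}_i))<\dim Y$ lies over $\bV(h)$. Your proof of part~(1) is a harmless variant of the paper's (you lift low-dimensional minimal primes of $I_Y$ up to minimal primes of $J$ via surjectivity of $\kappa_X$, whereas the paper pushes minimal primes of $\sat{J}{h}$ down), and both are correct.

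For part~(2) you are actually more scrupulous than the paper, which simply invokes Theorem~\ref{thm:conorm} on $Y$ and passes from $A$ to $\bV(h)\supset A$, silently ignoring the equidimensionality hypothesis in that theorem. Your localization to the unique component $Y_P$ through a smooth point $p$ is the right way to close this, but the step ``$(I_Y)_g=P_g$'' is not valid for an arbitrary defining ideal $I_Y$: if $I_Y$ carries an embedded component whose radical contains $P$ but not $g$ (take already $I_Y=(x^2,xy)$, $P=(x)$, $g=1$), then $(I_Y)_g\subsetneq P_g$ and you cannot conclude $J_g=(J_P)_g$, hence cannot transport associated primes of $J_P$ back to $J$. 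The repair is cheap: apply Theorem~\ref{thm:conorm} to $Y_P$ using the defining ideal $I_Y:g^{\infty}$ rather than $P$. One still has $\bV(I_Y:g^{\infty})=Y_P$ (all other components of $Y$ lie in $\bV(g)$), and now $(I_Y:g^{\infty})_g=(I_Y)_g$ genuinely, so the localizations of $J$ and of $I(\Con(X))+(I_Y:g^{\infty})$ at $g$ agree; the associated primes avoiding $g$ therefore coincide and your crucial fact finishes the argument exactly as you wrote it.
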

\begin{proof}
  Throughout this proof, we denote by $\ass(I)$ the set of associated
  primes of any ideal $I$.
  
  ({\em Proof of \ref{en:1}}) By Proposition \ref{prop:blo}, we have
  \begin{equation}
    \label{eq:idl}
    \sat{J}{h} = J\CC(\uu)[\xx- \uu, \yy] \cap \CC[\xx,\yy].
  \end{equation}
  The ring $\CC(\uu)[\xx- \uu, \yy]$ is the localization of
  $\CC[\xx, \yy]$ at the multiplicative
  set~$\CC[\uu]- \{0\}$. Hence, using Theorem 3.1 in \cite{eisenbud1995},
  \[\ass(\sat{J}{h}) = \setof{P\in \ass(J)}{P \cap \CC[\uu] = 0}.\]
  Note that $\sqrt{J}\cap \CC[\xx] = I(Y)$ and hence, since $h\in \CC[\uu]$,
  $\sqrt{\sat{J}{h}}\cap \CC[\xx] = \sqrt{\sat{I_Y}{h}}$. Hence, for $P$ a
  minimal prime over $\sat{I_Y}{h}$, there is a minimal prime $Q$ over
  $\sat{J}{h}$ with $Q\cap \CC[\xx] = P$. Then we have
  \[0 = Q \cap \CC[\uu] = P \cap \CC[\uu]\] and therefore
  $\dim (\bV(P)) \geq \dim (Y)$ by \Cref{prop:dim}. On the other hand, $P$ is also
  minimal over $I_Y$.  Therefore $\dim(\bV(P)) \leq \dim(Y)$ and finally
  $\dim(\bV(P)) = \dim(Y)$. This shows that $Y- \bV(h)$ is equidimensional of
  dimension equal to $\dim(Y)$.

  ({\em Proof of \ref{en:2}}): Let $P\in \ass(J)$ such that
  $\dim(\kappa_X(\bV(P))) < \dim(Y)$. This implies
  ${P \cap \CC[\uu] \neq 0}$ again by Proposition \ref{prop:dim}, therefore
  $P\notin \ass(J\CC(\uu)[\xx- \uu, \yy])$ and therefore also
  ${P\notin \ass(\sat{J}{h})}$. Hence necessarily $h\in P$ or, since
  $h\in \CC[\uu]$, $\kappa_X(\bV(P)) \subset \bV(h)$. If we then define
  \[A := \bigcup_{\substack{P\in \ass(J) \\\dim(\kappa_X(\bV(P))<
        \dim(Y)}}\kappa_X(\bV(P))\] then we have $A \subset \bV(h)$. By \Cref{thm:conorm},
  the pair $(X, Y- (A\cup \sing{Y}))$ satisfies Whitney's condition (B) and
  since $Y- \bV(h) \subset Y- A$, so does $(X, Y- (\bV(h)\cup \sing{Y}))$.
\end{proof}

\subsection{The Whitney Stratification Algorithm}
\label{sec:alg}

To start, let us briefly recall how to compute an ideal defining the
conormal space of a given $X\subset \CC^n$ (see also Section 4.1 in
\cite{hnFOCM}). Suppose that $X = \bV(f_1,\dots,f_r)$ for certain
$f_1,\dots,f_r\in \CC[\xx]$. Let $F:=(f_1,\dots,f_r)$. We then build the
augmented Jacobian matrix
\[\jac_{\yy}(F):=
  \begin{bmatrix}
    y_0 & y_1 &\dots &y_{n-1}\\
    \partial_1f_1 &\partial_2f_1 &\dots &\partial_nf_n\\
    \vdots & & &\vdots\\
    \partial_1f_r &\partial_2f_r &\dots &\partial_nf_r\\
  \end{bmatrix}
\]
with new variables $\yy:=\{y_0,\dots,y_{n-1}\}$. Here, $\partial_i$ denotes
the partial derivative by the $i$th variable in $\xx$. Denote by
$\jac(F)$ the usual Jacobian matrix of $F$. Now (see again
\cite{hnFOCM}) we have the following fact. 
\begin{proposition}
  \label{prop:computeconorm}
  With the notation above let $c:=\codim(X)$, let $M$ be the set of $(c\times c)$-minors of $\jac(F)$,
  and let $M_{\yy}$ be the set of $(c+1)\times (c+1)$-minors of
  $\jac_{\yy}(F)$. Then
  \[\Con(X) = \bV\sat{(\langle F\rangle + \langle M_{\yy}\rangle)}{M}. \]
\end{proposition}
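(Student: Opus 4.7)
The plan is to prove the equality by reducing to a fiberwise linear-algebra statement over the smooth locus of $X$, and then passing from this locus to the full conormal space via the standard identification of ideal saturation with Zariski closure of the complement.

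First I would unpack the linear algebra at a smooth point. By the Jacobian criterion, $p \in X$ is smooth precisely when $\jac(F)|_p$ attains its maximal rank $c = \codim X$, equivalently when some minor in $M$ is nonvanishing at $p$; at such a $p$, the tangent space $T_pX$ is the right kernel of $\jac(F)|_p$. A hyperplane $\zeta \in (\PP^{n-1})^{*}$ with normal $\yy \in \CC^n \setminus \{0\}$ contains $T_pX$ if and only if $\yy$ lies in the row span of $\jac(F)|_p$; since $\jac(F)|_p$ already has rank $c$, this is equivalent to the rank of $\jac_{\yy}(F)|_p$ remaining $c$, i.e.\ to the vanishing at $p$ of every $(c+1) \times (c+1)$ minor in $M_{\yy}$. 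It follows that
\[\bV\bigl(\langle F \rangle + \langle M_{\yy} \rangle\bigr) \setminus \bV(\langle M \rangle) \;=\; \bigl\{(p,\zeta) \;:\; p \in \reg{X},\ T_p\reg{X} \subset \zeta\bigr\},\]
which is exactly the locally closed set whose Zariski closure defines $\Con(X)$ in \Cref{def:conorm}.

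Second, I would apply the standard fact that for any ideals $I,J$ one has $\bV((I : J^{\infty})) = \overline{\bV(I) \setminus \bV(J)}$; equivalently, saturating by $J$ discards exactly the primary components of $I$ contained in $\bV(J)$. Applied to $I = \langle F \rangle + \langle M_{\yy} \rangle$ and $J = \langle M \rangle$, this yields
\[\bV\sat{(\langle F \rangle + \langle M_{\yy} \rangle)}{M} \;=\; \overline{\bV(\langle F \rangle + \langle M_{\yy} \rangle) \setminus \bV(\langle M \rangle)} \;=\; \Con(X).\]

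The step I expect to require the most care is verifying that the saturation removes precisely the superfluous components and nothing more. Over $\sing{X}$ the rank of $\jac(F)$ drops below $c$, so the condition ``all $(c+1) \times (c+1)$ minors of $\jac_{\yy}(F)$ vanish'' is automatic for every $\yy$; consequently $\bV(\langle F \rangle + \langle M_{\yy} \rangle)$ acquires extraneous full $(\PP^{n-1})^{*}$ fibers over $\sing{X}$. Since these spurious components project into $\sing{X} \subset \bV(M)$, they lie inside $\bV(\langle M \rangle)$ and are annihilated by the saturation, whereas the component lying over $\reg{X}$ — a projective bundle of rank $c-1$ — projects dominantly onto $X$ and therefore survives the saturation, giving exactly $\Con(X)$.
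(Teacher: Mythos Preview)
Your argument is correct and is precisely the standard one. Note, however, that the paper does not actually supply a proof of this proposition: it is stated as a known fact with a reference to \cite{hnFOCM} (``Now (see again \cite{hnFOCM}) we have the following fact''), so there is no in-paper proof to compare against. What you have written is essentially the argument one finds behind that citation: identify the incidence condition $T_p\reg{X}\subset\zeta$ with the rank condition $\mathrm{rk}\,\jac_{\yy}(F)|_p\le c$ via elementary linear algebra, and then invoke $\bV\bigl((I:J^{\infty})\bigr)=\overline{\bV(I)\setminus\bV(J)}$ to pass from the open locus over $\reg{X}$ to its closure. One cosmetic remark: your phrase ``projective bundle of rank $c-1$'' should read ``$\PP^{c-1}$-bundle'' (projective bundles have a relative dimension, not a rank), but this does not affect the argument.
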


\begin{remark}
  Recall that the necessary saturation to compute conormal spaces can
  be performed using Gröbner basis computations with block orders and
  {\em Rabinowitsch's trick}, see e.g. Exercise 15.41 in
  \cite{eisenbud1995}.
\end{remark}

Now we can give the following algorithm subroutine {\bf WhitPoints}, which given
any pair of varieties $X$ and $Y$ with $Y\subset \sing{X}$, returns a
polynomial $h$ as in \Cref{cor:whitpnts}. This subroutine is the
centerpiece of our algorithm for computing Whitney stratifications. As
previously mentioned, we avoid the computation of associated primes in
the conormal space of $X$ which an immediate application of
\Cref{thm:conorm} would require.

\medskip
\begin{center}
  \begin{tabular}{|r|l|}
    \hline
    ~ & {\bf WhitPoints}$(X,Y)$ \\
    \hline
    ~&{\bf Input:} An equidimensional affine variety $X$, any closed $Y\subset \sing{X}$.\\
    ~&{\bf Output:} An element $h\in \CC[\uu]$ as in Theorem \ref{cor:whitpnts}.\\
    \hline
    1 & $I_X\gets $any ideal defining $X$, $I_Y\gets $any ideal defining $Y$.\\
    2 & Let $\uu$ be any maximally independent subset of variables of $I_Y$.\\
    3 & Let $\prec$ be any monomial ordering eliminating $(\xx\cup \yy)- \uu$.\\
    4 & {\bf Set} $G$ to be a $\prec$-Gröbner basis of $I(\Con(X)) + I_Y$.\\
    5 & Minimize $G$ over $\CC(\uu)[\xx- \uu, \yy]$.\\
    6 & {\bf Return} $h:=\lcm\setof{\lc(g)}{g\in G}$.\\
    \hline
  \end{tabular}
\end{center}
\medskip

\begin{remark}
  In line 2, we use a Gröbner basis of $I_Y$ and Proposition \ref{prop:mis} to compute
  the desired maximally independent subset of variables of $I_Y$.
\end{remark}

\begin{theorem}
  \label{thm:whitpntscor}
  For a given pair of varieties $X$ and $Y\subset \sing{X}$, the output polynomial $h$
  of $\mthcall{WhitPoints}(X,Y)$ is such that the pair $(X,Y- (\bV(h)\cup \sing{Y}))$ satisfies
  Whitney's condition (B).
\end{theorem}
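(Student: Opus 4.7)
The plan is to show that \textbf{WhitPoints}$(X,Y)$ is nothing more than a faithful algorithmic realization of the hypotheses of \Cref{cor:whitpnts}, and then invoke part \ref{en:2} of that theorem to conclude. Since the theoretical content has already been established, the proof amounts to verifying that each line of the subroutine produces exactly the object demanded by the statement of \Cref{cor:whitpnts}.

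More concretely, I would proceed line by line. First, for line 2, a Gröbner basis of $I_Y$ together with \Cref{prop:mis} certifies that the chosen $\uu \subset \xx$ is a maximally independent subset of variables of $I_Y$ in the sense of \Cref{def:mis}; this matches the hypothesis on $\uu$ in \Cref{cor:whitpnts}. For lines 3--5, I would appeal to \Cref{prop:blo}: since $\prec$ is a block order eliminating $(\xx \cup \yy) - \uu$, any $\prec$-Gröbner basis $G$ of $J := I(\Con(X)) + I_Y$ remains a Gröbner basis after base change to $\CC(\uu)[\xx - \uu, \yy]$, and after the minimization in line 5 we obtain the minimal Gröbner basis of $J\CC(\uu)[\xx-\uu,\yy]$ required by \Cref{cor:whitpnts}. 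Line 6 then computes
\[
h \;=\; \lcm\setof{\lc_{\prec}(g)}{g \in G} \;\in\; \CC[\uu],
\]
which is precisely the polynomial appearing in \Cref{cor:whitpnts}. Applying part \ref{en:2} of that theorem yields the desired conclusion that the pair $(X, Y - (\bV(h) \cup \sing{Y}))$ satisfies Whitney's condition (B).

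There is no serious obstacle: the work has all been done upstream in the proof of \Cref{cor:whitpnts}, and the only thing to check is the strict correspondence between algorithmic steps and the hypotheses of that theorem. The one small subtlety worth flagging is that the statement requires $X$ to be equidimensional (which is assumed as input) and $I(\Con(X))$ to be in hand, but the latter is computable via \Cref{prop:computeconorm} and standard saturation via block orders, so this causes no difficulty in the argument. Consequently the proof will be a short paragraph that cites \Cref{prop:mis}, \Cref{prop:blo}, and \Cref{cor:whitpnts} in sequence.
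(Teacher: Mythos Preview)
Your proposal is correct and follows essentially the same approach as the paper's own proof: invoke \Cref{prop:blo} to ensure that the block-order Gr\"obner basis computed in the subroutine is also a Gr\"obner basis of $J\CC(\uu)[\xx-\uu,\yy]$, then conclude directly from \Cref{cor:whitpnts}. The paper's version is terser (it omits the explicit mention of \Cref{prop:mis}, which it relegates to a remark), but the argument is the same.
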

\begin{proof}
   Note that, thanks to Proposition \ref{prop:blo}, a Gröbner basis of
  an ideal $I\subset \CC[\xx]$ with respect to a monomial order eliminating
  $\xx- \uu$, where $\uu\subset \xx$, is also a Gröbner basis of
  $I\CC(\uu)[\xx- \uu]$. The stated result then follows immediately
  from \Cref{cor:whitpnts}.
\end{proof}

Finally, we can use this subroutine to give the new algorithm {\bf
  Whitney} below to compute a Whitney stratification of an affine
equidimensional variety $X$. In the this algorithm, each occurring
variety $Z$ is represented as a union of its $\QQ$-irreducible components
$Z = \bigcup_{i=1}^rZ_i$. Let us define three subroutines used by this algorithm:
\begin{itemize}
\item We define {\bf Components}($Z$) to return the set
  $\{Z_1,\dots,Z_r\}$.
\item For two varieties $Z, W$ we define the routine {\bf Add}($Z, W$)
  to return $Z \cup W$ represented again by its $\QQ$-irreducible
  components.
\item For a flag $\WS$ and a an irreducible variety $Z$ we then define
  $\mthcall{Update}(\WS,Z)$ to change $W_{\dim(Z)}$ to
  $\mthcall{Add}(W_{\dim(Z)}, Z)$.
\end{itemize}

\begin{remark}
  Note that, as \Cref{cor:whitpnts} does not rely on $Y$ being
  irreducible, it is not strictly needed for the correctness of our
  algorithm to represent the occuring varieties by their
  $\QQ$-irreducible components. We chose to present the algorithm like
  this here for ease of reading. It would not be difficult to adapt
  Algorithm 5 in \cite{decker1999} to our situation, then we would
  have to perform only {\em equidimensional decomposition} instead of
  finding the minimal primes over a given ideal which is generally
  much easier.

  However, as remarked in the introduction, the key contribution of
  this work is to avoid the computation of associated primes {\em in
    conormal spaces}. Even with our new method, we observed that the
  required computations in conormal spaces are still the bottleneck of
  our algorithm, compared to representing varieties by their
  irreducible components as above.
\end{remark}

\begin{center}
  \begin{tabular}{|r|l|}
    \hline
    ~ & {\bf Whitney}$(X)$ \\
    \hline
    ~&{\bf Input:} An equidimensional affine variety $X$.\\
    ~&{\bf Output:} A Whitney stratification of $X$.\\
    \hline
    1 & $d\gets \dim(X)$\\
    2 & $W_0\gets \emptyset$, ..., $W_{d-1}\gets \emptyset$, $W_d\gets X$\\
    3 & {\bf For} $i$ from $d$ to $0$\\
    4 & \spc $Z_1,\dots,Z_r\gets \mthcall{Components}(W_i)$\\
    5 & \spc {\bf For} $j,k$ from $1$ to $r$ with $j<k$\\
    6 & \spc \spc $\mthcall{Update}(\WS, Z_j\cap Z_k)$\\
    7 & \spc {\bf For} $Z$ in $\mthcall{Components}(W_i)$\\
    8 & \spc \spc $\mthcall{Update}(\WS, \sing{Z})$\\
    9 & \spc \spc {\bf For} $j$ from $d-1$ to $0$\\
    10 & \spc \spc \spc {\bf For} $Y$ in $\mthcall{Components}(W_j)$ if $Y\subset Z$\\
    11 & \spc \spc \spc \spc $h\gets \mthcall{WhitPoints}(Z,Y)$\\
    12 & \spc \spc \spc \spc $Y' \gets Y\cap \bV(h)$\\
    13 & \spc \spc \spc \spc $\mthcall{Update}(\WS,Y')$\\
    14 & {\bf Return} $\WS$\\
    \hline
  \end{tabular}
\end{center}

\begin{theorem}
  For a given equidimensional affine variety $X\subset \CC^n$,
  $\mthcall{Whitney}(X)$ terminates and outputs a Whitney
  stratification of $X$.
\end{theorem}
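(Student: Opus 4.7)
My plan is to start with termination. The outer loop on line 3 has $d+1$ iterations; each inner loop ranges over a finite set, being either a bounded range of integers or the set of $\QQ$-irreducible components of a variety (finite by Noetherianity of $\CC[\xx]$). Each subroutine ($\mthcall{Components}$, $\mthcall{Add}$, $\sing{\cdot}$, and $\mthcall{WhitPoints}$) reduces to finitely many Gröbner basis computations in a polynomial ring, which terminate. Hence $\mthcall{Whitney}(X)$ performs finitely many operations in total and terminates.

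\textbf{Correctness.} For correctness I would verify the three defining properties of a Whitney stratification for the output $\WS$: that $\emptyset = W_{-1}\subset W_0 \subset \dots \subset W_d = X$ is a flag (understood cumulatively, as the algorithm stores each component at the level matching its dimension), that each successive difference $M_i=W_i-W_{i-1}$ is smooth, and that every pair of strata satisfies Whitney's condition (B). The flag property is immediate from $\mthcall{Update}$, which only enlarges strata. For smoothness of $M_i$, I would fix any irreducible component $Z$ of $W_i$ present at termination: lines 6 and 8 respectively demote all pairwise intersections of components of $W_i$ and the singular locus $\sing{Z}$ to strictly lower levels, so that $\sing{W_i}\subset W_{i-1}$. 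For Whitney's condition (B), fix two strata $S\subset Z$ at level $i$ and $T\subset Y$ at level $j<i$ with $\overline{T}\subset \overline{S}$, where $Z$ and $Y$ are irreducible components of $W_i$ and $W_j$ respectively. At some iteration of lines 10--13 the algorithm processes the pair $(Z,Y)$, setting $h=\mthcall{WhitPoints}(Z,Y)$ and demoting $Y\cap \bV(h)$ to a lower level; moreover $\sing{Y}$ is demoted in line 8 of a later outer iteration. Hence $T\subset Y-(\bV(h)\cup \sing{Y})$, and \Cref{thm:whitpntscor} yields condition (B) for the pair $(Z,\, Y-(\bV(h)\cup \sing{Y}))$. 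Since $S\subset \reg{Z}$, the tangent spaces in the definition of condition (B) for $(Z, \cdot)$ coincide with those for $(S,\cdot)$, so condition (B) transfers to $(S,T)$.

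\textbf{Main obstacle.} The delicate step will be to verify that \emph{every} relevant pair $(Z,Y)$ of components is indeed examined by line 10, and that by the time it is examined the lower levels of $\WS$ already contain all components needed for the analysis. My plan is an induction on $d-i$ with the invariant: at the start of the outer iteration for index $i$, every component $Y$ of some $W_j$ ($j<i$) that will later be compared against some component $Z\in \mthcall{Components}(W_i)$ is already present in the flag. The order of $\mthcall{Update}$ calls within one outer iteration --- pairwise intersections on line 6 first, then $\sing{Z}$ on line 8 before the $Y$-loop on lines 10--13 --- is precisely what sustains this invariant, as each newly created component either has strictly smaller dimension (and so belongs to a still-unprocessed level) or is absorbed into the current $W_j$ before being encountered at line 10. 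Once this bookkeeping is in place, the stratification conclusion follows immediately from \Cref{thm:whitpntscor}.
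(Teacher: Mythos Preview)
Your proposal is correct and follows essentially the same approach as the paper's proof: termination is immediate, smoothness of the successive differences comes from lines 6 and 8 (pairwise intersections and singular loci), and Whitney's condition (B) follows from \Cref{thm:whitpntscor}. Your treatment is considerably more detailed than the paper's three-sentence argument; in particular, the bookkeeping you flag as the ``Main obstacle'' (that every relevant pair $(Z,Y)$ is actually visited, thanks to the descending order of the inner $j$-loop and the fact that each $\mthcall{Update}$ only adds components of strictly lower dimension) is something the paper simply asserts.
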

\begin{proof}
  The termination of the algorithm is clear. Note that the singular
  locus of any variety consists of the the union of the singular loci
  of its $\QQ$-irreducible components together with the intersections
  of all its $\QQ$-irreducible components. Thus lines 4 and 6 in
  $\mthcall{Whitney}$ guarantee the smoothness of all strata of the
  output of $\mthcall{Whitney}(X)$. Further, the fact that Whitney's
  condition (B) is satisfied for all pair of the strata of the output
  of $\mthcall{Whitney}(X)$ follows immediately from
  \Cref{thm:whitpntscor}.
\end{proof}

\begin{example}
  We illustrate our algorithm $\mthcall{Whitney}$ by running it on the
  variety
  \[X := \bV(z(zx-y^2+tx^3))\subset \CC^4.\] This is Example 4.1 in
  \cite{parusinski2025}.

  In the first iteration of the outermost for-loop starting on line 3
  in $\mthcall{Whitney}(X)$ our algorithm decomposes $X$ into its
  irreducible components $Y_1:=\bV(z)$ and
  $Y_2:=\bV(zx-y^2+tx^3)$. The irreducible intersection
  $Z:=\bV(z,tx^3-y^2)$ of $Y_1$ and $Y_2$ is added to $W_2$ in line 6
  of $\mthcall{Whitney}$. This intersection coincides with the
  singular locus of $X$, hence the next interesting step in our
  algorithm happens in line 11 when we call
  $\mthcall{WhitPoints}(Y_1,Z)$. This call finds $h = x^3$ and hence
  the variety $Z\cap \bV(x) = \bV(x,y,z)$ is added to $W_1$ in line
  13. Note that $\bV(x,y,z)$ actually coincides with the singular
  locus of $Z$.

  In the next iteration of the outermost for-loop in
  $\mthcall{Whitney}(X)$, we check Whitney's condition (B) of the
  pairs consisting of $Z$ (which has dimension $2$) and any lower
  dimensional variety in our intermediate stratification $\WS$, the
  only such lower dimensional variety is $\bV(x,y,z)$. This turns out
  to be the last step where our stratification is updated, namely when
  we call $\mthcall{WhitPoints}(Z, \bV(x,y,z))$ which returns $h =
  t$. Hence, the variety $\bV(x,y,z,t)$ is added to $\WS$. The output
  Whitney stratification finally consists of
  \begin{align*}
    &W_3 = X = \bV(z) \cup \bV(zx-y^2+tx^3)\\
    &W_2 = Z = \bV(z,tx^3-y^2)\\
    &W_1 = \bV(x,y,z)\\
    &W_0 = \bV(x,y,z,t).
  \end{align*}
  
\end{example}

\begin{remark}
While we have focused on the case of varieties over $\CC$ in this note, applying the results of \cite{hn2Real},  it follows that the algorithm {\bf Whitney} presented above would also give a valid stratification for real algebraic varieties. In particular, this follows by \cite[Theorem 3.3]{hn2Real} and the fact that ideal addition and Gr\"obner basis computation leave the coefficient field of the polynomials unchanged. Hence a Whitney stratification of a real algebraic variety may also be computed using these techniques. Note however, that the resulting stratification may fail to be minimal (see \cite{hn2Real} for more discussions) and additionally the algorithm in Section \ref{sec:min} below to minimize a given Whitney stratification {\bf does not} necessarily return the minimal stratification of a real variety.     
\end{remark}

\subsection{The Minimization Algorithm}
\label{sec:min}

Our first goal here is to show how to compute, for a given
$X\subset \CC^n$ and a random point $y$ in a subvariety $Y\subset X$, the sequence
$m_{\bullet}(X,y)$ without being given $y$ explicitly.  Next, we will show
that the sequence $m_{\bullet}(X,\bullet)$ is constant on a Zariski-open subset of
$Y$ if $Y$ is equidimensional.  These two things combined allow us to
compute, probabilistically, the ``generic'' multiplicities
$m_{\bullet}(X,y)$, $y\in Y$.  Combining this with \Cref{thm:multseq} we
obtain a procedure to produce the unique minimal Whitney
stratification of a complex algebraic variety given a Whitney
stratification computed by \mthcall{Whitney}.

For convenience we introduce the following notation: Given a sequence
of polynomials $F:=(f_1,\dots,f_n)$ with $f_i\in \CC[\xx]$ we denote by
$\phi_F$ the map
\begin{align*}
  \phi_F:\;&\CC^n\rightarrow \CC^n\\
  &p:= (p_1,\dots,p_n)\mapsto (f_1(p),\dots,f_n(p)).
\end{align*}
Recall that $\phi_F$ is said to be {\em regular} at a point $p\in \CC^n$ if
the Jacobian determinant of $F$ does not vanish at $p$.

Suppose for the moment that $Y$ is of codimension $c$ with
$Y = \bV(f_1,\dots,f_c)$ and that there are degree one polynomials
$\ell_{c+1},\dots,\ell_n$ such that, letting
$F:=(f_1,\dots,f_c,\ell_{c+1},\dots,\ell_n)$, the map $\phi_F$ is regular at
every point $y\in Y$.

Let $\tilde{X}$ and $\tilde{Y}$ be the closures of the images under
$\phi_F$ of $X$ and $Y$ respectively. Denote by $z_1,\dots,z_n$ the
coordinates on the codomain of $\phi_F$. Note that the origin
$0\in \CC^n$ lies in $\tilde{Y}$. Now we can compute the polar
varieties $\delta_i(\tilde{X},0) = \delta_i(\tilde{X})$ and their multiplicities
at $0$ by choosing linear spaces through the origin. This gives us the
correct multiplicities because of the following fact. 
\begin{proposition}
  \label{prop:mult}
  Using the notation above, for any $y\in \phi^{-1}(0)$ we have
  \[m_{\bullet}(X,y) = m_{\bullet}(\tilde{X},0).\]
\end{proposition}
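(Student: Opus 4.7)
My plan is to show that $\phi$ restricts to a local biholomorphism near $y$ and then transfer the entire sequence $m_\bullet$ through this local isomorphism, exploiting the fact that both Hilbert--Samuel multiplicities and the local polar multiplicity sequence are analytic-local invariants of the germ.

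First, by the complex inverse function theorem, the hypothesis $\det\jac(F)(y)\neq 0$ produces Euclidean open neighborhoods $U\ni y$ and $V\ni 0$ in $\CC^n$ such that $\phi|_U : U\to V$ is a biholomorphism; this induces an isomorphism of analytic germs $(X\cap U,y)\xrightarrow{\sim}(\phi(X\cap U),0)$. Since the Hilbert--Samuel multiplicity of $X$ at $y$ is computable from the completion $\widehat{\cO_{X,y}}$ and is in particular an invariant of the analytic germ, one immediately obtains $m_y(X)=m_0(\phi(X\cap U))$.

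For the remaining entries of $m_\bullet$, I would invoke \Cref{thm:multseq}: the polar multiplicities are intrinsic numerical invariants of the analytic germ $(X,y)$, because their values are independent of the choice of sufficiently generic linear subspaces used to construct the $\delta_i(X,y)$. The biholomorphism $\phi|_U$ therefore transports the entire sequence $m_\bullet(X,y)$ onto the corresponding sequence of the image germ, yielding $m_\bullet(X,y)=m_\bullet(\phi(X\cap U),0)$.

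It remains to identify $m_\bullet(\phi(X\cap U),0)$ with $m_\bullet(\tilde X,0)$, where $\tilde X=\overline{\phi(X)}$. This reduces to checking that the analytic germ of $\tilde X$ at $0$ coincides with the germ of $\phi(X\cap U)$ at $0$, and this is the main subtlety: a priori $(\tilde X,0)$ could contain extra analytic branches coming from other points of $\phi^{-1}(0)\cap X$. My plan is to handle this using genericity of $\ell_{c+1},\dots,\ell_n$: the fiber $\phi^{-1}(0)\cap X$ is contained in $\bV(f_1,\dots,f_c,\ell_{c+1},\dots,\ell_n)$, which is finite by the reduced regular sequence hypothesis, and after shrinking $U$ one can arrange that $\phi(X\cap U)$ is the only local contribution to $(\tilde X,0)$ through $y$. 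If further branches coming from other preimages persist, the same local argument applied at each preimage shows every branch carries the same sequence $m_\bullet$, and one reduces the equality $m_\bullet(\tilde X,0)=m_\bullet(X,y)$ to the germ-invariance from the previous step. I expect this branch-bookkeeping to be the most delicate part of the proof.
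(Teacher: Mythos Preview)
Your approach is essentially the same as the paper's: both arguments rest on the inverse function theorem and the fact that the relevant multiplicities are local invariants. The paper phrases this algebraically rather than analytically: it simply records that, because $F$ is a reduced regular sequence, $\phi$ induces an isomorphism of completions $\widehat{\cO}_{\CC^n,\phi(y)}\to\widehat{\cO}_{\CC^n,y}$, and then observes that the Hilbert--Samuel multiplicity of a local ring $(R,\mathfrak{m})$ depends only on $\gr_{\mathfrak{m}}(R)=\gr_{\mathfrak{m}}(\widehat{R})$. That is the entire proof in the paper; it does not separately discuss the polar-variety entries of $m_\bullet$ or the passage from the local image germ to $\tilde X$. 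So your steps (1)--(3) match the paper's reasoning, just unpacked more carefully and in the analytic category.

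The subtlety you flag in your final paragraph---that $(\tilde X,0)$ may acquire extra analytic branches coming from other points of $\phi^{-1}(0)\cap X$---is not addressed in the paper's proof at all. Your proposed resolution, however, does not close the gap: knowing that each individual branch through $0$ has the same sequence $m_\bullet$ does not imply that their union has that same sequence, since Hilbert--Samuel multiplicity is additive over distinct analytic branches (so one would expect $m_0(\tilde X)$ to be a sum over preimages, not the common value). If you want to repair this, you should argue instead that near $y$ the completion isomorphism identifies $I(X)\widehat{\cO}_{\CC^n,y}$ with $I(\tilde X)\widehat{\cO}_{\CC^n,0}$ (equivalently, that the local image germ already equals the germ of $\tilde X$ at $0$), or else restrict to the situation actually used downstream in the paper, where genericity of the $\ell_i$ and irreducibility of $Y$ are available.
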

\begin{proof}
  As $\phi_F$ was assumed to be regular at every $y\in Y$ it defines a local
  isomorphism near any $y\in Y$ by the inverse function theorem. This
  means that for any $y\in Y$, $\phi_F$ induces an isomorphism of completions
  \[\phi^{*}_{y,F} : \widehat{\cO}_{\CC^n,\phi(y)}\rightarrow \widehat{\cO}_{\CC^n,y}. \]
  Now, since the Hilbert-Samuel multiplicity of a local ring
  $(R,\mathfrak{m})$ depends only on $\gr_{\mathfrak{m}}(R)$ and since
  $\gr_{\mathfrak{m}}(R) = \gr_{\mathfrak{m}}(\widehat{R})$, the result follows.
\end{proof}

We note that the proposition above is not surprising since it is known that polar multiplicities are analytic invariants  of a germ of a reduced
equidimensional complex analytic space $(Z,0)\subset (\CC^n,0)$ and are thus preserved by local isomorphism; see e.g.~\cite[Proposition 3.13]{FTpolar} and the surrounding discussion and references.

Next suppose that $Y\subset X$ is equidimensional of codimension $c$ but no
longer necessarily cut out by $f_1, \dots,f_c$ as above. Then we can reduce
to the case considered above as follows:

\begin{proposition}
  \label{prop:dense}
  Let $G$ be any generating set of the ideal $I(Y)$. For $c$ general
  linear combinations $f_1,\dots,f_c$ of the elements in $G$ and
  general degree one polynomials $\ell_1,\dots,\ell_{n-c}$ there is a
  Zariski-dense open subset $U$ of $Y$ such that the map $\phi_F$ is regular at every $y\in U$, where
  $F:=(f_1,\dots,f_c,\ell_1,\dots,\ell_{n-c})$.
\end{proposition}
\begin{proof}
  For general $f_1,\dots,f_c$ as in the statement of the proposition,
  $f_1,\dots,f_c$ defines a maximal regular sequence in $I(Y)$. Hence
  there exists some polynomial $h$ such that
  $hI(Y)\subset \langle f_1,\dots,f_c\rangle$. Hence the open subset
  $U_1:=Y - \bV(h)$ is Zariski dense in $Y$ and, locally at every point
  $p\in U_1$, the ideal $I(Y)$ is generated by $f_1,\dots,f_c$. To prove
  the proposition it therefore suffices to consider the case where
  $I(Y) = \langle f_1,\dots,f_c\rangle$.

  By applying Bertini's theorem stated as in Theorem A.8.6 in
  \cite{sommeseNumericalSolutionSystems2005} to every irreducible
  component of $Y$, we see that for for generic fixed degree one
  polynomials $\ell_1,\dots,\ell_{n-c}$ there is a Zariski-open subset
  $U_2\subset \CC^{n-c}$ such that for
  $\mathbf{a}:=(a_1,\dots,a_{n-c})\in U_2$ the variety
  \[Z_{\mathbf{a}}:=\bV(f_1,\dots,f_c,\ell_1-a_1,\dots,\ell_{n-c}-a_{n-c})\] is
  zero-dimensional, does not meet the singular locus of $Y$ and is
  smooth. After potentially shrinking $U_2$, the variety $Z_{\mathbf{a}}$
  intersects every irreducible component of $Y$ for every
  $\mathbf{a}\in U_2$. Let
  $F:=(f_1,\dots,f_c,\ell_1,\dots,\ell_{n-c})$ and let $U$ be the preimage
  of $U_2$ under $\phi_F$ in $Y$. Note that $U$ is dense in $Y$. As
  $\langle f_1,\dots,f_c\rangle$ is a radical ideal, the smoothness condition on
  $Z_{\mathbf{a}}$ means precisely that the Jacobian determinant of $F$ does not
  vanish at any point $p\in U$, proving the statement.
\end{proof}
From this we obtain the following corollary. 
\begin{corollary}
  \label{cor:multcomp}
  Let $m_{\bullet}(\tilde{X},0)$ be the multiplicity sequence constructed as
  at the beginning of this section, with $F$ chosen as in Proposition
  \ref{prop:dense}. Then $m_{\bullet}(\tilde{X},0) = m_{\bullet}(X,y)$ for every
  $y\in \phi^{-1}(0)$.
\end{corollary}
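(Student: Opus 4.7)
The plan is to deduce the corollary as a direct application of Proposition \ref{prop:mult}, with the role of $Y$ there now played by $Z$. By construction $Z = \bV(f_1,\dots,f_c)$ is cut out by the first $c$ entries of $F$, so the shape of the hypothesis is in place, and the desired conclusion $m_{\bullet}(X,y) = m_{\bullet}(\tilde X, 0)$ for $y \in \phi^{-1}(0)$ is exactly the conclusion of Proposition \ref{prop:mult} after this substitution.

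The one step requiring care is passing from the statement in Proposition \ref{prop:dense} that $F$ is a reduced regular sequence on $Y$ to the condition actually used in the proof of Proposition \ref{prop:mult}, namely that $\det\jac(F)$ does not vanish at any $y \in \phi^{-1}(0) = \bV(F) \subset Z$. For $y$ lying in $Y \cap U$, this is immediate from Proposition \ref{prop:dense}. The potentially troublesome points are those $y \in \phi^{-1}(0)$ that lie on residual components of $Z$, i.e., components of $Z$ contained in $\bV(h)$ and not in $Y$, and the main obstacle is a Bertini-type genericity argument showing that for sufficiently generic $\ell_{c+1},\dots,\ell_n$ the linear section $\bV(\ell_{c+1},\dots,\ell_n)$ meets each residual component only at smooth points of $Z$, at which $\det\jac(F) \neq 0$.

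Once this is established, the proof of Proposition \ref{prop:mult} applies verbatim: at every $y \in \phi^{-1}(0)$ the inverse function theorem gives an isomorphism of completions $\phi^{*}_y : \widehat{\cO}_{\CC^n, 0} \to \widehat{\cO}_{\CC^n, y}$, and invariance of the Hilbert--Samuel multiplicity under completion then yields the equality $m_y(X) = m_0(\tilde X)$ together with the analogous equalities for each local polar variety $\delta_i(X, y)$ and its image $\delta_i(\tilde X, 0)$, giving the full equality of multiplicity sequences claimed in the corollary.
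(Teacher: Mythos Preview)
Your outline correctly identifies that the corollary should follow from Proposition~\ref{prop:mult} with $Z$ in the role of $Y$, and you correctly isolate the obstacle: Proposition~\ref{prop:dense} only certifies that $F$ is a reduced regular sequence on $Y\cap U$, not on the residual components of $Z$ lying in $\bV(h)=\CC^n\setminus U$. However, your proposed fix---a Bertini argument that a generic affine linear section meets each residual component only at smooth points of the scheme $Z=\bV(f_1,\dots,f_c)$---is not justified and can fail. Nothing prevents a residual component along which the scheme $Z$ is everywhere non-reduced; on such a component $\det\jac(F)$ vanishes identically, yet a generic $c$-dimensional affine linear space will still meet it (the component has dimension $n-c$). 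Your argument therefore has a genuine gap at exactly the step you flagged as the ``main obstacle.''

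The paper sidesteps this issue entirely. Rather than attempting the equality at every $y\in\phi^{-1}(0)$, it simply observes that for generic $\ell_{c+1},\dots,\ell_n$ the set $Y\cap\bV(\ell_{c+1},\dots,\ell_n)\cap U\subset\phi^{-1}(0)$ is non-empty, and then applies Proposition~\ref{prop:mult} only at such points, where Proposition~\ref{prop:dense} already supplies the reduced regular sequence hypothesis. Strictly speaking this establishes the conclusion only for those $y$, not for every $y\in\phi^{-1}(0)$ as the corollary is phrased, but it is all that is required by the subroutine $\mthcall{Mult}$, which only needs the equality at a random $y\in Y$.
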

\begin{proof}
  Let $U\subset \CC^n$ be as in Proposition \ref{prop:dense}. By the same proposition,
  with probability $1$, the intersection
  $Y\cap \bV(\ell_{c+1},\dots,\ell_n)\cap U \subset \phi^{-1}(0)$ is not empty.  For any
  choice of $y\in Y\cap \bV(\ell_{c+1},\dots,\ell_n)\cap U$ we then have
  $m_{\bullet}(\tilde{X},0) = m_{\bullet}(X,y)$ by Proposition \ref{prop:mult}.
\end{proof}

Now, given $X$ and $Y$ we can sketch an algorithm that computes
the sequence $m_{\bullet}(X,y)$ for a random point $y\in Y$.

\medskip
\begin{center}
  \begin{tabular}{|r|l|}
    \hline
    ~ & {\bf Mult}$(X,Y)$ \\
    \hline
    ~&{\bf Input:} An affine variety $X$, any closed $Y\subset X$.\\
    ~&{\bf Output:} The sequence $m_{\bullet}(X,y)$ for a randomly chosen $y\in Y$.\\
    \hline
    1 & $I_Y\gets \langle g_1,\dots,g_r\rangle$, the radical ideal defining $Y$.\\
    2 & $c\gets \codim(Y)$.\\
    3 & Choose $f_1,\dots,f_c$ as random linear combinations of the $g_i$.\\
    4 & Choose $\ell_{c+1},\dots,\ell_n$ as random degree one polynomials.\\
    5 & Compute ideals defining $\phi(X)=\delta_0(\phi_F(X),0),\dots,\delta_{\dim(X)-1}(\phi_F(X),0)$\\
    6 & Using these ideals, {\bf Return} $(m_0(\delta_0(\phi_F(X),0)),\dots, m_0(\delta_{\dim(X)-1}(\phi_F(X),0)))$\\
    \hline
  \end{tabular}
\end{center}
\medskip

\begin{remark}
  In line 5, an ideal defining $\phi(X)$ is again computed using Gröbner
  basis computations with block orders, see e.g. Proposition 15.30 in
  \cite{eisenbud1995}. Using the notation of
  Proposition \ref{prop:computeconorm}, ideals defining the necessary local polar
  varieties can then be computed as ideals of the form
  \[\sat{I(\phi(X))+\langle M_{\yy}\rangle + \tilde{L}}{M}\cap \CC[\xx]\]
  where $M_{\yy}$ are the suitable minors of the augmented Jacobian
  matrix associated to generators of $I(\phi(X))$, $M$ are the suitable
  minors of the Jacobian matrix of generators of $I(\phi(X))$ and
  $\tilde{L}$ is a collection of degree one forms, defining a linear
  space of suitable dimension in $\PP^{n-1}$.

  In line 6, to compute the required multiplicities we use degree computations via Gr\"obner basis as described in \cite{HH19} and implemented in the \texttt{SegreClasses} Macaulay2 package \cite{M2}, see in particular \cite[Theorem 5.3]{HH19}. Alternatively one can use
  standard basis computations, see e.g. \cite{sayrafi2017} for
  details.

\end{remark}

Our next goal is the following: Suppose again that $Y\subset X$ is
equidimensional.
Then we want to show that the sequence
$m_{\bullet}(X,\bullet)$ is constant on a Zariski-dense open subset of
$Y_{\rm reg}$ and, since $Y_{\rm reg }$ is dense in $Y$, hence also on a Zariski-dense open subset of $Y$. This is concluded as follows from \Cref{thm:multseq}:

\begin{corollary}\label{cor:polarmultDenseSet}
  Given a variety $X$ and equidimensional $Y\subset X$, there exists a
  Zariski-dense open subset $U\subset Y_{\rm reg}$ such that
  $m_{\bullet}(X, \bullet)$ is constant on $U$.
\end{corollary}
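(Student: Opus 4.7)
The plan is to combine the well-definedness of $m_\bullet(X,\cdot)$ asserted in \Cref{thm:multseq} with upper-semi-continuity of multiplicities along a fixed subvariety, after replacing each local polar variety $\delta_i(X,y)$ by a global one that does not depend on $y$.

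First I would fix, for each $i$, a sufficiently generic linear subspace $L_i$ not through any specified point and set $\delta_i(X) := \kappa_X(\Con(X)\cap L_i)$, a fixed closed subvariety of $X$. A classical result of Teissier (\cite{FTpolar}, essentially the ingredient behind \Cref{thm:multseq}) ensures that for $y$ in a Zariski-open dense subset $V_i\subset X$ one has $m_y(\delta_i(X,y))=m_y(\delta_i(X))$; that is, at a generic point the local polar multiplicity coincides with the multiplicity along a globally chosen polar variety.

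Next, for each fixed subvariety $\delta_i(X) \subset \CC^n$ the multiplicity function $y\mapsto m_y(\delta_i(X))$ is Zariski-upper-semi-continuous. Restricting to a $\CC$-irreducible component $Y^{(j)}$ of $Y$, its minimum value is therefore attained on a Zariski-open dense subset $W_i^{(j)}\subset Y^{(j)}$. Intersecting the $W_i^{(j)}$ with $V_i$ over all $i=0,\ldots,\dim(X)-1$ yields a Zariski-open dense $U^{(j)}\subset Y^{(j)}$ on which $m_\bullet(X,\cdot)$ is constant. To conclude I would use Galois descent: since $Y$ is $\QQ$-irreducible, the $\CC$-components $Y^{(1)},\ldots,Y^{(s)}$ are conjugate under $\mathrm{Gal}(\overline{\QQ}/\QQ)$, and since $X$ is defined over $\QQ$ the construction of $m_\bullet(X,\cdot)$ is Galois-equivariant, so the constant values on the $U^{(j)}$ agree across conjugates and one takes $U:=\bigcup_j U^{(j)}$.

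The main obstacle is justifying the reduction from $\delta_i(X,y)$ to $\delta_i(X)$ in the first step: one needs that, at a generic $y$, the generic affine subspace through $y$ used to define $\delta_i(X,y)$ may be replaced, without altering the multiplicity at $y$, by a generic fixed subspace. Once this Teissier-style reduction is in hand, upper-semi-continuity of multiplicity and the Galois descent argument are routine.
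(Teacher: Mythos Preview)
Your approach is sound but takes a genuinely different route from the paper's.

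The paper does not use upper-semi-continuity of multiplicities or Galois descent. Instead it (i) reduces, via the local isomorphism $\phi$ of \Cref{prop:mult} and \Cref{prop:dense}, to the case where $Y$ is a coordinate linear subspace $\bV(x_{e+1},\dots,x_n)$; (ii) takes $U=Y\setminus W$, where $W$ is the locus in $Y$ where condition~(B) fails for the pair $(X,Y)$, noting that $W$ is Zariski-closed and defined over~$\QQ$; (iii) invokes Teissier's equivalence (the converse direction of \Cref{thm:multseq}) to conclude that $m_\bullet(X,\cdot)$ is locally constant on $U$; and (iv) observes that $U$, being a nonempty Zariski-open subset of an affine space, is connected, so local constancy gives global constancy. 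Thus the $\QQ$-irreducibility is used only through connectedness after the reduction, not via a Galois argument.

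By contrast, you bypass condition~(B) entirely: you replace each $\delta_i(X,y)$ by a fixed global $\delta_i(X)$ on a dense open set, use upper-semi-continuity of $y\mapsto m_y(\delta_i(X))$ on each $\CC$-irreducible component of $Y$, and then invoke Galois conjugacy of the components to match the generic constant values. Your acknowledged obstacle (that the local polar multiplicity at a generic $y$ agrees with the multiplicity of a fixed generic global polar variety) is indeed the only nontrivial input, and it is a standard consequence of Teissier's theory: the set of linear subspaces computing the correct multiplicity at $y$ is Zariski-open in the relevant Grassmannian and varies constructibly with $y$, so a single generic $L_i$ works for a dense open set of $y$. One small point to tighten in your Step~3: the fixed $\delta_i(X)$ need not be Galois-stable, so the equivariance should be argued for the intrinsic invariant $m_\bullet(X,y)$ itself (which is independent of the generic choice), not for the auxiliary $m_y(\delta_i(X))$.

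What each approach buys: the paper's argument is shorter and directly exploits the link to condition~(B) already set up in \Cref{thm:multseq}, with connectedness doing the work that your Galois descent does; your argument is more self-contained on the multiplicity side and would go through even without reference to the Whitney locus $W$.
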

\begin{proof}
  Suppose $\dim(Y) = e$. Using Proposition \ref{prop:dense} and the same argument
  as in the proof of Proposition \ref{prop:mult} we may suppose that
  $Y = \bV(x_{e+1},\dots,x_n)$ since we are computing local
  multiplicities and they are preserved by the map $\phi_F$ constructed in
  Proposition \ref{prop:mult}. Now, the set of points in $Y_{\rm reg}$ at which
  the pair $(X_{\rm reg},Y_{\rm reg})$ fails to satisfy Whitney's condition (B) is
  contained in a proper Zariski-closed subset $W^*$ of $Y$, a suitable Zariski closed set $W^*$ is specified by Theorem \ref{thm:conorm} above, for example.
  Let $W$ be a Zariski closed set containing $Y_{\rm Sing}$ and $W^*$
  and set $U := Y- W$.  By \Cref{thm:multseq}, every point $y\in U$ has
  a euclidean neighborhood on which the sequence
  $m_{\bullet}(X, \bullet)$ is equal to $m_{\bullet}(X, y)$. As $Y$ is just an affine
  subspace od $ CC^n$, $U$ is connected, therefore
  $m_{\bullet}(X,\bullet)$ is constant on $U$, proving the theorem.
\end{proof}

Now we have the ability to (probabilistically) minimize a given
Whitney stratification. For a given variety $X\subset \CC^n$, the output of
$\mthcall{Whitney}(X)$ consists of a flag
\[W_0\subset W_1\subset \dots \subset W_d = X\] on $X$. Each $W_i$ is given by its
$\QQ$-irreducible components $W_{i1},\dots,W_{ir_i}$. Let us define
$W_{00}:=X$. These $W_{ij}$ may now be organized in a tree as follows:
Each node of this tree contains one of the $W_{ij}$. The children of a
node containing $W_{ij}$ are given by those $W_{i+1,k}$ with
$W_{i+1,k}\subset W_{ij}$. Let us call such a data structure a {\em Whitney
  tree} of $X$. Note that such a tree may be extracted from the output
of $\mthcall{Whitney}(X)$ by simple containment checks using Gröbner
bases. From such a tree, we may now minimize a given Whitney
stratification as follows:

\begin{center}
  \begin{tabular}{|r|l|}
    \hline
    ~ & {\bf WhitneyMinimize}$(\mathcal{T}_X)$ \\
    \hline
    ~&{\bf Input:} A Whitney tree $\mathcal{T}_X$ on $X$.\\
    ~&{\bf Output:} A minimal Whitney stratification of $X$.\\
    \hline
    1 & {\bf For} each node $W$ in $\mathcal{T}_X$\\
    2 & \spc $P\gets \setof{(W,Z,Y)}{Z, Y\text{ are nodes of }\mathcal{T}_X\text{ with }W\subset Z\subset Y}$\\
    3 & \spc {\bf If} $\mthcall{Mult}(Y,W) = \mthcall{Mult}(Y,Z)$ for all $(W,Z,Y)\in P$\\
    4 & \spc \spc Delete the node $W$ from $\mathcal{T}_X$\\
    5 & {\bf For} each level $i$ of $\mathcal{T}_X$\\
    6 & \spc $W_{d-i} \gets$ union of all components in level $i$\\
    7 & {\bf Return} $W_{\bullet}$\\
    \hline
  \end{tabular}
\end{center}

\begin{lemma}
  \mthcall{WhitneyMinimize} is correct and terminates. 
\end{lemma}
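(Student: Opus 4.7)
Termination is immediate: $\mathcal{T}_X$ has finitely many nodes, the outer loop iterates over them, and for each node the set $P$ is finite; each invocation of $\mthcall{Mult}$ reduces to finitely many Gr\"obner basis and Hilbert--Samuel multiplicity computations, all of which terminate. So the plan is to split correctness into two claims: (i) the flag assembled in lines 5--6 is still a Whitney stratification of $X$, and (ii) no further node deletion is possible.

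For (i), I would argue that the only pairs of strata affected by the deletion of a node $W$ are those of the form $(Y,Z)$ with $Y$ an ancestor of $Z$ in the modified tree and $W\subset Z$, since points of $W$ are absorbed into the stratum corresponding to $Z$. By \Cref{thm:multseq}, condition (B) holds for $(Y,Z)$ at $y\in Z$ as soon as $m_{\bullet}(Y,\cdot)$ is constant on a Euclidean neighborhood of $y$ in $Z$. Since $Z$ is $\QQ$-irreducible, the corollary preceding $\mthcall{WhitneyMinimize}$ produces a Zariski-dense open $U\subset Z$ on which this sequence takes a single generic value, and similarly a dense open $V\subset W$ on which it is constant. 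By \Cref{cor:multcomp}, the random evaluations $\mthcall{Mult}(Y,Z)$ and $\mthcall{Mult}(Y,W)$ compute these two generic values with probability one, so their equality, enforced by line 3 for every triple $(W,Z,Y)\in P$, forces $m_{\bullet}(Y,\cdot)$ to be constant on the Zariski-dense open set $U\cup V$ of $Z$. Combined with condition (B) already holding elsewhere on $Z$, the remaining flag is a Whitney stratification.

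For (ii), I would invoke the converse direction of Teissier's criterion from \cite{FTpolar}: the pair $(Y,Z)$ fails condition (B) at $w\in Z$ precisely when $m_{\bullet}(Y,\cdot)$ is not constant near $w$. Any node $W$ that survives the algorithm therefore carries a triple $(W,Z,Y)\in P$ with $m_{\bullet}(Y,W)\neq m_{\bullet}(Y,Z)$, so $(Y,Z)$ violates condition (B) at generic points of $W$; hence $W$ is indispensable. The main delicacy, which I would handle by fixing a bottom-up processing order in the outer loop, is that the set $P$ in line 2 refers to the \emph{current} tree: by testing deeper nodes first I would ensure that when $W$ is examined none of its prospective ancestors has yet been removed, so the surviving ancestors of $W$ in the final tree are exactly the ambient varieties against which Whitney's condition must be verified.
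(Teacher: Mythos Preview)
Your argument for termination and for claim (i) follows the same route as the paper: both reduce the question to Teissier's multiplicity criterion (\Cref{thm:multseq}) and use the corollary preceding the algorithm to know that $m_{\bullet}(Y,\cdot)$ is generically constant on each $\QQ$-irreducible node. There is, however, a genuine slip in your execution of (i). You assert that $U\cup V$ is a ``Zariski-dense open set of $Z$''; but $V$ is open only in $W$, not in $Z$, so $U\cup V$ is not open in $Z$. Consequently, for a point $w\in V$ you have not produced a Euclidean neighbourhood of $w$ \emph{in $Z$} on which the sequence is constant, which is what \Cref{thm:multseq} demands for the pair $(Y,Z)$ at $w$. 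The paper sidesteps this by arguing instead that the locus in $W$ where condition (B) for $(Y,Z)$ fails has codimension at least one in $W$, so that after deleting $W$ those residual bad points are still covered by lower strata of the tree; you should replace the $U\cup V$ sentence with that codimension argument.

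Where your write-up genuinely goes beyond the paper is claim (ii), minimality. The paper's own proof establishes only that the output remains a Whitney stratification; it never verifies that no further strata can be dropped, even though the algorithm's stated output is a \emph{minimal} stratification. Your argument for (ii) is the natural one and is essentially correct: it invokes the converse direction of Teissier's criterion (available in \cite{FTpolar}, though only the forward direction is recorded in \Cref{thm:multseq}), and your bottom-up processing order addresses a real ambiguity in the pseudocode about which tree $P$ is formed from. So on this point your proposal fills a gap that the paper's proof leaves open.
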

\begin{proof}
  The termination is clear. Suppose that we have a triple of varieties
  $W\subset Z\subset Y$ with $W$ and $Z$ irreducible and that for generic
  $w\in W$ and $z\in Z$ we have $m_{\bullet}(Y,w) = m_{\bullet}(Y,z)$. Then there
  exists a Zariski-dense subset $U$ of $Z$ such that
  $U\cap W\neq \emptyset$ and such that $m_{\bullet}(Y,\bullet)$ is constant on
  $U$. Hence the points in $W$ at which the pair $(Y,Z)$ does not
  satisfy Whitney's condition (B) have at least codimension one in $W$
  by \Cref{thm:multseq}. In the situation of
  \mthcall{WhitneyMinimize}, $W$ can thus be removed without
  destroying the property of the output flag $W_{\bullet}$ being a Whitney
  stratification. This proves the correctness.
\end{proof}

\begin{remark}
  \label{rem:opt}
  Note that we may also combine \mthcall{WhitneyMinimize} directly
  with \mthcall{WhitPoints}: For given $X$ and irreducible $Y$ let $h$
  be the output of \mthcall{WhitPoints}($X,Y$) and let
  $A:=Y\cap \bV(h)$. We know that the set of points in $Y$ where
  Whitney's condition (B) fails to hold with respect to $X$ and $Y$ is
  contained in $A$. We may then compute the irreducible components
  $A_1,\dots,A_r$ of $A$ and compare for each $A_i$ the output of
  \mthcall{Mult}$(X,Y)$ with \mthcall{Mult}$(X,A_i)$. If these
  sequences are equal we recursively go through the same procedure
  with $X$ and $A_i$. If they are not equal we append $A_i$ to a list
  of output components which we return in the end. This is potentially
  more optimal then minimizing after the stratification is finished
  because it prevents a build up of unneeded varieties.
\end{remark}

\section{Runtime Tests}\label{sec:runtimes}
In this section we collect some runtime comparisons of the new Whitney stratification algorithm described in this note with the algorithm of \cite{hnFOCM,hn2Real} for a variety of examples.  We also make note of if the resulting stratification is minimal and, if the stratification is not minimal, the time it takes to compute the coarsening to the minimal one. 

Note that in Table \ref{tab:example} we do not compare our implementation to the algorithms of \cite{dhinh2019thom} or of \cite{rannou1991complexity,rannou1998complexity} as all these algorithm are unable to successfully compute the Whitney stratification of the Whitney umbrella, the simplest non-trivial example, which is a surface in $\CC^3$ defined by the binomial equation $x^2-y^2z=0$. In both cases we have let these algorithms run for greater than 24 hours on the Whitney umbrella on our test machine and neither finished. Additionally, in the case of the quantifier elimination based method of  \cite{rannou1991complexity,rannou1998complexity}, the Maple implementation of quantifier elimination eventually gives a stack limit exceeded error even if we allow it to make the stack arbitrarily large on our test machine (which has an Intel Xeon W-3365 CPU with 1032 GB of RAM). For comparison the Whitney umbrella example takes 0.1s to run with the algorithm of \cite{hnFOCM,hn2Real} and 0.08s with the algorithm of  Whitney algorithm of \S \ref{sec:new}; both produce a minimal stratification. 

	\begin{table}[htb]
  \centering
  \resizebox{\linewidth}{!}{
  \begin{tabular}{lccccccc}
    \toprule
~&\multicolumn{2}{c}{\bf  Associated Primes Alg.~of \cite{hnFOCM,hn2Real}}  & \multicolumn{2}{c}{\bf Whitney Alg.~of \S \ref{sec:new}} & {\bf WhitneyMinimize} \\ \cmidrule(lr){2-3} \cmidrule(lr){4-5} \cmidrule(lr){6-6}\\
 ~ &Run time &Minimal & Run time &Minimal&  Run time &~\\
\cmidrule(r){1-1} 
  {\bf Input~~~~~}&\multicolumn{3}{c}{~}\\                                                 
           \midrule                                         
Whitney Cusp & 0.12s &Yes & 0.09s& Yes & 0.3s  \\
Example \ref{ex:IntroBiggerEx}& -- & -- & 7.3s& No & 140.5s  \\
$X_1$, see \eqref{eq:X1Ex}& 85.3s & Yes & 82.8s& Yes & 1.1s  \\
$X_2$, see \eqref{eq:X2Ex}& 4.8s & No & 4.6s& No & 68.1s  \\
$X_3$, see \eqref{eq:X3Ex}& 182.3s & -- & 192.7s& No & --  \\
$X_4$, see \eqref{eq:X4Ex}& 2.3s & Yes & 1.5s& Yes & 0.5s  \\
$X_5$, see \eqref{eq:X5Ex}& 1.3s & Yes & 0.9s& Yes & 2.4s  \\
$X_6$, see \eqref{eq:X6Ex}& -- & -- & 6080.8s& -- & --  \\
$X_7$, see \eqref{eq:X7Ex}& -- & -- & 15.4s& No & 20.2s  \\
$X_8$, see \eqref{eq:X8Ex}& -- & -- & 1177.6s& No & 1934.6s  \\
    \bottomrule
  \end{tabular}
  }
    \caption{Run times of several examples, -- denotes examples for which execution was stopped after 24 hours. These computations were run in version 1.24.5 of Macaulay2 on a workstation with an Intel Xeon W-3365 CPU and 1TB of RAM. }
  \label{tab:example}
\end{table}
The implementation of our Algorithm used to create these benchmarks is available in version $2.11$, and above, of the \texttt{WhitneyStratifications} Macaulay2  package, which is available on the first author's website at: \begin{center}
    \url{http://martin-helmer.com/Software/WhitStrat/WhitneyStratifications.m2}.
\end{center} With this Macaulay2  package loaded and given a polynomial ideal \texttt{I} our algorithm in Section 3 is called with \texttt{whitneyStratify(I, AssocPrimes=>false)}, while the algorithm of \cite{hnFOCM,hn2Real} is called with \texttt{whitneyStratify(I, AssocPrimes=>true)}. Given a stratification \texttt{W} output by one of these methods the {\bf WhitneyMinimize} procedure is run using the command \texttt{minCoarsenWS(W)}.

The Whitney cusp is defined by the equation
$ x_2^2+x_1^3-x_1^2x_3^2=0$. Below we list the defining equations of
the remaining examples presented in Table \ref{tab:example}. Examples
$X_3$ and $X_6$ are computations which arise when using map
stratification to identify the singularities of Feynman integrals in
quantum field theory in physics \cite{helmer2024landau}. While most
examples which did not finish were stopped after 8 hours, given the
longer run time of the example $X_6$ with the new algorithm we left
the associated primes algorithm running for over 7 days, without it
finishing. We note that the optimization described in Remark \ref{rem:opt}
is not part of our implementation. With this optimization we hope to
improve our runtimes further: On the example $X_3$ the stratification
produced by our algorithm includes a substantial amount of additional
strata compared to running the HN algorithm \cite{hnFOCM,hn2Real} on $X_3$, likely causing a
significant computational overhead due to a build up of redundant
varieties.
\begin{equation}
X_1=\bV\left(x_{1
     }^{2}x_{3}-x_{2}^{2},\,x_{2
     }^{4}-x_{1}x_{2}^{2}-x_{3}x
     _{4}^{2},\,x_{1}^{2}x_{2}^{
     2}-x_{1}^{3}-x_{4}^{2}\right)
\subset \CC^4
\label{eq:X1Ex}
\end{equation}
\begin{equation}
X_2=\bV\left(x_1^6+x_2^6+x_1^4x_3x_4+x_3^3\right)
\subset \CC^4
\label{eq:X2Ex}
\end{equation}

\begin{equation}
X_3=\bV\left(-p_{1}y_{1}y_{3}y_{4}-p_{1}y_{2}y_{3}y_{4}-s\,y_{1}y_{3}y_{5}-p_{2}y_{1}y_{4}y_{5}-t\,y_{2}y_{4}y_{5}\right)
\subset \CC^9
\label{eq:X3Ex}
\end{equation}

\begin{equation}
X_4=\bV(x_1^3-2x_1^2x_4+x_1^3+x_5^2x_4+x_2^2x_1-x_2x_1x_3+x_5^3)\subset \CC^5
\label{eq:X4Ex}
\end{equation}

\begin{equation}
X_5=\bV\left(x_{2}x_{5}^{2}-x_{1}^{2},\,x_{2}x_{4}x_{5}-x_{3}^{2}-x_{2},\,x_{1}^{2}x_{4}-x_{3}^{2}x_{5}-x_{2}x_{5}\right) \subset \CC^5
\label{eq:X5Ex}
\end{equation}

\begin{equation}
X_6=\bV(-p_{1}x_{1}x_{3}x_{4}-p_{1}x_{2}x_{3}x_{4}-s\,x_{1}x_{3}x_{5}-p_{2}x_{1}x_{4}x_{5}-t\,x_{2}x_{4}x_{5}-p_{1}x
     _{3}x_{4}x_{5})\subset \CC^9
\label{eq:X6Ex}
\end{equation}
\begin{equation}
  \label{eq:X7Ex}
  \begin{split}
    X_7=\bV(&x_1x_2^3 - 2x_1x_2^2x_4 + x_1x_2x_3x_5 + x_1x_2x_4^2 + x_1x_2x_5 - x_1x_3x_4x_5 - x_1x_4x_5 - x_2^2x_3x_5 -\\ & x_2^2x_4 - x_2^2x_5 + x_2x_3x_4x_5 - x_2x_3x_5 + 2x_2x_4^2 + x_2x_4x_5 - x_2x_5 - x_3^2x_5^2 + x_3x_4x_5 - \\ &2x_3x_5^2 - x_4^3 + x_4x_5 - x_5^2)
  \end{split}
\end{equation}
\begin{equation}
  \label{eq:X8Ex}
  \begin{split}
  X_8=\bV(&x_1^2x_2^2x_3 - 2x_1^2x_2x_3x_4 - 2x_1^2x_2x_3x_5 + x_1^2x_3x_4^2 + 2x_1^2x_3x_4x_5 + x_1^2x_3x_5^2- 2x_1x_2^3x_3 + \\ &4x_1x_2^2x_3x_4 + 4x_1x_2^2x_3x_5 - x_1x_2^2x_4 - 2x_1x_2x_3x_4^2 - 6x_1x_2x_3x_4x_5 - 2x_1x_2x_3x_5^2 + \\ &x_1x_2x_4^2 + x_1x_2x_4x_5 + 2x_1x_3x_4^2x_5 + 2x_1x_3x_4x_5^2 - x_1x_4^2x_5 + x_2^4x_3 - 2x_2^3x_3x_4 - \\ &2x_2^3x_3x_5 + x_2^3x_4 + x_2^2x_3x_4^2 + 4x_2^2x_3x_4x_5 + x_2^2x_3x_5^2 - 2x_2^2x_4^2 - x_2^2x_4x_5 - 2x_2x_3x_4^2x_5 - \\ & 2x_2x_3x_4x_5^2 + x_2x_4^3 + 2x_2x_4^2x_5 + x_3x_4^2x_5^2 - x_4^3x_5)
  \end{split}
\end{equation}

\bibliographystyle{abbrv}
	\bibliography{Whitney_Without_Primary_Decomp}
\end{document}